  \newenvironment{smallarray}[1]
 {\null\,\vcenter\bgroup\scriptsize
  \arraycolsep=.13885em
  \hbox\bgroup$\array{@{}#1@{}}}
 {\endarray$\egroup\egroup\,\null}
\newtheorem{theorem}{Theorem}[section]
\newtheorem{remark}{Remark}
\newtheorem{assumption}[theorem]{Assumption}
\newtheorem{example}{Example}
\newtheorem{appxlem}{Lemma}[subsection]
\newtheorem{appxthrm}[appxlem]{Theorem}
\newcommand{\map}[3]{#1: #2 \rightarrow #3}
\newcommand{\subscr}[2]{{#1}_{\textup{#2}}}
\newcommand{\until}[1]{\{1,\dots,#1\}}
\newcommand{\blkdiag}{\mathrm{blkdiag}}
\newcommand{\Ubar}{\overbar{U}}
\newcommand\aamsout{\bgroup\markoverwith{\textcolor{red}{\rule[0.5ex]{2pt}{1pt}}}\ULon}
\newcommand{\Rank}{\operatorname{Rank}}
\newcommand{\real}{\mathbb{R}}
\newcommand{\transpose}{\mathsf{T}} %or \top or \intercal
\newcommand{\T}{\mathsf{T}} %or \top or \intercal
\newcommand{\mc}{\mathcal}
\newcommand{\lqr}{\text{LQR}}
\newcommand{\kf}{\text{KF}} 
\newcommand{\expect}[1]{\mathbb{E}\left[#1\right]}
\newcommand{\Tr}[1]{\mathrm{tr}\left[#1\right]}
\newcommand{\overbar}[1]{\mkern 1.5mu\overline{\mkern-1.5mu#1\mkern-1.5mu}\mkern 1.5mu}
\newcommand{\Vect}{\text{vec}}
\newcommand{\1}{\mathds{1} }
\newcommand{\D}{\text{D}}
\newcommand{\dlqg}{\text{dLQG}}
\newcommand{\lqg}{\text{LQG}}
\newcommand{\dd}{\text{d}}
\newcommand{\argmin}[2] {\mathrm{arg}\min_{#1}#2}
\DeclareSymbolFont{bbold}{U}{bbold}{m}{n}
\DeclareSymbolFontAlphabet{\mathbbold}{bbold}
\newcommand\oprocendsymbol{\hbox{$\square$}}
\newcommand\oprocend{\relax\ifmmode\else\unskip\hfill\fi\oprocendsymbol}
\newcommand*{\QEDA}{\hfill\ensuremath{\blacksquare}}%
\let\NAT@parse\undefined
\begin{document}

%\title{\LARGE \bf Closed-form Data-driven Controller for LQR with Process Noise}
%\title{\LARGE \bf Closed-form Expression for Data-driven LQG Controller:\\ Finite Sample Analysis and Convergence Guarantees}
\title{\LARGE \bf On the Sample Complexity of the Linear Quadratic
  Gaussian Regulator}

% \title{\bf Performance Limitations of Perception-Based Estimation in
%   Non-Nominal Environments}

\author{Abed~AlRahman~Al~Makdah~and~Fabio~Pasqualetti \thanks{This
    paper is based upon work supported in part by awards
    ONR-N00014-19-1-2264, AFOSR-FA9550-20-1-0140, and
    AFOSR-FA9550-19-1-0235.  A. A. Al Makdah and F. Pasqualetti are
    with the Department of Electrical and Computer Engineering and
    Mechanical Engineering at the University of California, Riverside,
    \href{mailto:aalm005@ucr.edu}{\{\texttt{aalm005}},\href{mailto:fabiopas@engr.ucr.edu}{\texttt{fabiopas\}@ucr.edu}}.}}

\maketitle

\begin{abstract}
  In this paper we provide direct data-driven expressions for the
  Linear Quadratic Regulator (LQR), the Kalman filter, and the Linear
  Quadratic Gaussian (LQG) controller using a finite dataset of noisy
  input, state, and output trajectories. We show that our data-driven
  expressions are consistent, since they converge as the number of
  experimental trajectories increases, we characterize their
  convergence rate, and we quantify their error as a function of the
  system and data properties. These results complement the body of
  literature on data-driven control and finite-sample analysis, and
  they provide new ways to solve canonical control and estimation
  problems that do not assume, nor require the estimation of, a model
  of the system and noise and do not rely on solving implicit
  equations.
\end{abstract}

\section{Introduction}\label{sec:
  introduction} Consider the discrete-time, linear, time-invariant
system
\begin{equation}\label{eq: system}
  \begin{aligned}
    x(t+1) &= A x(t) +B u(t)+ w(t),\\
    y(t)  & = Cx(t)+v(t), \qquad t\geq 0,
  \end{aligned}
\end{equation}
where $x(t)\in\real^{n}$ denotes the state, $u(t)\in \real^{m}$ the
control input, $y(t)\in\real^{p}$ the measured output, $w(t)$ and
$v(t)$ the process and measurement noise at time $t$. The LQG control
problem asks for the input that~minimizes the cost function
\begin{align}\label{eq: LQG cost}
  \lim_{T\rightarrow \infty}\mathbb{E} \left[
  \frac{1}{T}\Big(\sum_{t=0}^{T-1}x(t)^{\T}Q_x x(t) +
  u(t)^{\T} R_u u(t)\Big) \right] , 
\end{align}
where $Q_x\succeq 0$, $R_u\succ 0$ are weight matrices and $T$ is the
control horizon. With the standard assumptions that\footnote{These
  assumptions also hold throughout this paper.}
\begin{itemize}
%\item[(A1)] the process and measurement noise sequences are
%  independent and satisfy $w(t)\sim\mc{N}(0,W)$ and
%  \mbox{$v(t)\sim\mc{N}(0,V)$}, with $W\succeq 0$ and $V\succ 0$, at
%  all times;
\item[(A1)] the process and measurement noise sequences and the
  initial state are independent at all times and satisfy
  $w(t)\!\sim\!\mc{N}(0,Q_w)$, $v(t)\!\sim\!\mc{N}(0,R_v)$, and
  $x(0)\!\sim\!\mc{N}(0,\Sigma_0)$, with $Q_w\succeq 0$, $R_v\succ 0$,
  and $\Sigma_0\succ 0$;

\item[(A2)] the pairs $(A, B)$ and $(A, Q_w^{\frac{1}{2}})$ are
  controllable, and the pairs $(A,C)$ and $(A,Q_x^{\frac{1}{2}})$ are
  observable;
\end{itemize}
the input that solves the LQG problem can obtained by concatenating
the Kalman filter for \eqref{eq: system} with the (static) controller
that solves the LQR problem for \eqref{eq: system} with weight
matrices $Q_x$ and $R_u$ \cite{KZ-JCD-KG:96}. That is,
\begin{align}\label{eq: LQR KF}
  u^*(t) = \subscr{K}{LQR} \subscr{x}{KF} (t),
\end{align}
where $\subscr{x}{KF} (t)$ is the Kalman estimate of the state
$x(t)$. The classic, model-based computation of the LQR gain and
Kalman filter in \eqref{eq: LQR KF} requires the complete knowledge of
the system \eqref{eq: system}, including the noise
statistics. Motivated by the recent successes of data-driven and
machine-learning methods, we seek here a solution to the LQG problem
that relies only on a (finite) dataset of experimental data, without
the need to estimate the system dynamics and noise statistics.

%\smallskip
\noindent
\textbf{Related work.} Data-driven methods for system analysis and
control have flourished in the last years and are revolutionizing the
field \cite{BR:18}. The methods developed in this paper fall in the
category of direct data-driven methods \cite{GB-DSB-FP:20}, where
controls are obtained directly from data bypassing the classic system
identification step \cite{MG:05}. In line with earlier work and
differently from optimization-based approaches
\cite{BH-KZ-NL-MM-MF-TB:23,FD-PT-CDP:22}, we pursue here closed-form
data-driven expressions, which are typically computationally
advantageous \cite{FC-GB-FP:22}, are transparent, and can reveal novel
insights into the~problems~\cite{FC-FP:22}.

This paper focuses on data-driven LQG control, while most of the
literature on data-driven control has focused on the LQR problem with
noiseless data \cite{IM-PR:07,GRGS-ASB-CL-LC:19,MR-CDP-PT:20}. Recent
work \cite{CYC-AB:22} has studied the design of data-driven
controllers from noisy data \cite{CDP-PT:21,BG-PME-TS:20}, the design
of data-driven Kalman filters \cite{XZ-BH-TB:23}, imitation-based LQG
control design \cite{TG-AAAM-VK-FP:23}, and some versions of the
output-weighted LQG control problem \cite{MF-BDM-PVO-MG:99, RES-GS:94}. Compared
to \cite{RES-GS:94}, in particular, this paper does not assume perfect
knowledge of the Markov parameters or any part of the system dynamics
and noise, and it does not estimate them to solve the state-weighted
LQG problem. To the best of our knowledge, this paper contains the
first direct, closed-form data-driven solution to the state-weighted
LQG problem, with finite-sample performance guarantees.

The recent literature on the analysis of the sample complexity of
estimation and control problems is also relevant to this work. In
particular, \cite{SD-HM-NM-BR-ST:20,YZ-LF-MK-NL:21} follow an indirect
approach, where sample complexity bounds are derived for the
identification of the system dynamics and such errors are propagated
towards the design of LQR and LQG controllers. Differently from this
paper, this analysis is valid only for stable systems and
output-weighted LQG costs. Bounds on the performance of the learned
LQG controller are also derived in \cite{HM-ST-BR:19} assuming a
sufficiently small error in the system identification step
\cite{SO-NO:22,AT-GJP:19,YZ-NL:21,ST-RF-MS:23}, and in
\cite{HM-AZ-MS-MRJ:21} where the optimal LQR is learned in a
model-free setting using gradient methods. Although this paper makes
use of similar technical tools, the approach pursued here is direct
and does not rely on the identification of the system matrices, nor on
optimization algorithms to design or tune robust controllers. Further,
this paper considers the canonical LQG setting, rather than the noisy
LQR problem or the output-weighted LQG problem with noisy controls,
and it provides closed-form expressions for the optimal controllers
rather than their performance.

\noindent
\textbf{Contributions of the paper.} The main contributions of this
paper is the characterization of direct data-driven formulas for the
LQR gain, Kalman filter, and LQG gain using a dataset of trajectories
of the input, state, and output of the system \eqref{eq:
  system}. Importantly, since the experimental data is noisy and the
system dynamics and noise statistics are unknown, we show that our
formulas are consistent, as they converge to the true expressions when
the amount of experimental data increases. Additionally, we
characterize the convergence rate of our expressions, as well as their
error when the data is of finite size. Finally, we provide
illustrative examples and remarks to highlight how the properties of
the system and of the experimental data affect the accuracy of our
formulas.

%\smallskip
\noindent
\textbf{Organization of the paper.} The remainder of the paper is
organized as follows. Section \ref{sec: setup} formalizes our problem
setting and contains some preliminary results. Section \ref{sec: main
  results} contains our main results and examples, and Section
\ref{sec: conclusion} concludes the paper. Finally, all proofs are in
the Appendix.

%\smallskip
\noindent
\textbf{Notation.} A Gaussian random variable $x$ with mean $\mu$ and
covariance $\Sigma$ is denoted as $x\sim\mc{N}(\mu,\Sigma)$. The
$n\times n$ identity matrix is denoted by $I_n$, and the $n \times m$
zero matrix is denoted by $0_{n\times m}$. The expectation operator is
denoted by $\mathbb{E}[\cdot]$. The trace of a square matrix $A$ is
denoted by $\Tr{A}$. A positive definite (semidefinite) matrix $A$ is
denoted as $A\succ 0$ ($A\succeq 0$). The Kronecker product is denoted
by $\otimes$, and the vectorization operator is denoted by
vec($\cdot$). The left (right) pseudo inverse of a tall (fat) matrix
$A$ is denoted by $A^{\dagger}$. A block-diagonal matrix with block
matrices $A$ and $B$ is denoted by $\blkdiag(A,B)$. The smallest
(largest) singular value of a matrix $A$ is denoted by
$\sigma_{\text{min}}(A)$ \big($\sigma_{\text{max}}(A)$\big).

\section{Problem formulation and preliminary results}\label{sec:
  setup}
In this work we aim to compute the LQG inputs in a data-driven setting
where datasets from offline experiments are available but the system
matrices and noise statistics are unknown. In particular, we have
access to the following
data:
% {\aammargin{Motivate/justify accessing the state
% data}} % $N$ input, state, and output trajectories of horizon $T$,
\begin{align}\label{eq: data}
  U\! =\!
  \begin{bmatrix}
    u^{1} \!\!\!&\!\! \cdots \!\!&\!\!\! u^{N}
  \end{bmatrix}\!,~
                                   X \!=\!
                                   \begin{bmatrix}
                                     x^{1} \!\!\!&\!\! \cdots \!\!&\!\!\! x^{N}
                                   \end{bmatrix}\!,~
                                                                    Y\!=\!
                                                                    \begin{bmatrix}
                                                                      y^{1} \!\!\!&\!\! \cdots \!\!&\!\!\! y^{N}
                                                                    \end{bmatrix}\!,
\end{align}
where $x^i$ and $y^i$ are the $i$-th state and output trajectories of
\eqref{eq: system} generated by the input $u^i$. That is, for
$i \in \until{N}$,
\begin{align*}
  u^{i}& =
         \begin{bmatrix}
           u^{i}(0) \\ \vdots \\ u^{i}(T-1)
         \end{bmatrix}
  ,
    x^{i} =
         \begin{bmatrix}
           x^{i}(0) \\ \vdots \\ x^{i}(T)
         \end{bmatrix}
  ,
    y^{i} =
         \begin{bmatrix}
           y^{i}(0) \\ \vdots \\ y^{i}(T)
         \end{bmatrix}
  ,
\end{align*}
% \begin{align*}
%   u^{i}& =
%          \begin{bmatrix}
%            u^{i}(0)^{\T} & \cdots & u^{i}(T-1)^{\T}
%          \end{bmatrix}^{\T}
%                                     ,\\
%                                     x^i &= 
%          \begin{bmatrix}
%            x^{i}(0)^{\T} & \cdots & x^{i}(T-1)^{\T}
%          \end{bmatrix}^{\T},
%                                     \\
%                                     y^i &= 
%          \begin{bmatrix}
%            y^{i}(0)^{\T} & \cdots & y^{i}(T-1)^{\T}
%          \end{bmatrix}^{\T},
% \end{align*}
where $T$ is the horizon of the control experiments. We make the
following assumption on the experimental inputs.

\begin{assumption}{\bf \emph{(Experimental inputs)}}\label{assump:
    data input}
  The inputs in \eqref{eq: data} are independent and identically
  distributed, that is, $u^i(t) \sim \mathcal{N}(0,\Sigma_u)$, with
  $\Sigma_u\succ 0$, for all $i \in \until{N}$ and times.\oprocend
\end{assumption}

In our analysis we will make use of an equivalent characterization of
the LQG inputs derived in \cite[Theorem 2.1]{AAAM-VK-VK-FP:22}, which
shows that these inputs can also be computed~as
\begin{align}\label{eq: static LQG}
  u^*(t+n) = \subscr{K}{LQG}
  \begin{bmatrix}
    u^*(t) \\ \vdots \\ u^*(t + n - 1) \\ y^*(t + 1) \\ \vdots \\ y^*(t+n)
  \end{bmatrix},
\end{align}
where the static gain $\subscr{K}{LQG}$ depends on the system and
noise matrices, and $y^*$ is the output of \eqref{eq: system} with
input $u^*$.

\begin{remark}{\bf \emph{(State vs output measurements)}}\label{rmrk:
    data} We assume here that the
  state of the system \eqref{eq: system} can be directly
  measured. This assumption is easily satisfied in certain lab
  experiments, where additional sensors (e.g., a motion capture system
  for robotic applications) can be deployed during the design stage to
  measure the system state and collect training data. Further, state
  measurements are necessary to solve the state-weighted LQG problem,
  since the state weight matrix $Q_x$ uses specific coordinates that
  cannot be inferred from output measurements only
  \cite{AT-IZ-NM-GJP:22}, but they can be substituted with input and
  output measurements for different versions of the LQG problem. See
  also \cite{AAAM-VK-VK-FP:22} for a reformulation of the LQG problem
  that uses only input and output measurements.  \oprocend
\end{remark}

\section{Data-driven formulas for LQG control}\label{sec: main
  results}
In this section we derive our main results, that is, direct
data-driven formulas for the LQR controller, the Kalman filter, and
the LQG controller using the data \eqref{eq: data}. Additionally, we
show that these formulas are consistent, i.e., they converge to the
true model-based expressions as the data grows, and we finally
quantify their error when the data is finite.

We start by introducing some additional notation. Let
\begin{align}\label{eq: X0}
  X_t =
  \begin{bmatrix}
    x^1(t)^\transpose & \cdots & x^N (t)^\transpose
  \end{bmatrix}^\transpose,
\end{align}
and, given input and state trajectories $\subscr{u}{v} \in \real^{mT}$
and $\subscr{x}{v} \in \real^{nT}$, let
$\subscr{u}{m} \in \real^{m \times T}$ and
$\subscr{x}{m} \in \real^{n \times T}$ be the matrices obtained by
reorganizing the inputs and states in the vectors $\subscr{u}{v}$ and
$\subscr{x}{v}$ in chronological order. The next result characterizes
the LQR gain from data.

%\margin{Add $\begin{bmatrix}
%        X_{0,\text{d}} \\ \subscr{U}{d}
%      \end{bmatrix}
%      \begin{bmatrix}
%        X_{0,\text{d}} \\ \subscr{U}{d}
%      \end{bmatrix}^\dag$ to proof}

\begin{theorem}{\bf \emph{(Data-driven LQR gain)}}\label{thm: LQR
    gain}
  Let $x_0 \in \real^n$ and
  \begin{align}\label{eq: LQR trajectories}
    \begin{bmatrix}
      \subscr{u}{v} \\ \subscr{x}{v}
    \end{bmatrix}
    =
    \begin{bmatrix}
      H \\ M
    \end{bmatrix}
    P^{-1/2}\left(
    \begin{bmatrix}
      I_n \!\!&\!\! 0_{n\times mT}
    \end{bmatrix}
            P^{-1/2}\right)^{\dagger} x_0 ,
  \end{align}
  where
  \begin{align}\label{eq: M and P} 
    \begin{split}
      H &= 
      \begin{bmatrix}
        0_{mT \times n} \!\!&\!\! I_{mT}
      \end{bmatrix}, 
      M = X
      \begin{bmatrix}
        X_0 \\ U
      \end{bmatrix}^\dag
      , \text{ and }\\
      P &= M^\transpose \left( I_{T+1}\!\otimes\! Q_x \right) M +
      \blkdiag\left(0_{n\times n }, I_{T}\! \otimes\! R_u\right) .
    \end{split}
  \end{align}
  Let $\subscr{x}{v}^* \in \real^{nT}$ be the trajectory of \eqref{eq:
    system} with initial state $x_0$, control input
  $u^*(t) = \subscr{K}{LQR} x(t)$, and $w(t) = 0$ at all times. Then,
  the data-driven estimate
  $K_\text{LQR}^\text{D} = \subscr{u}{m} \subscr{x}{m}^{\dag}$ of
  $K_\text{LQR}$
  \begin{align*}
    \| \subscr{K}{LQR} - K_\text{LQR}^\text{D} &\|_2 \le
                                                 \frac{1}{\subscr{\sigma}{min}(\subscr{x}{m}^*)\left(1-
                                                 \kappa(\subscr{x}{m}^*)\right)}
                                                 \left(
                                                 \frac{c_1}{\sqrt{N}}
                                                 + c_2 \rho^T \right)\!, 
  \end{align*}
  for sufficiently large $N$ and probability at least $1-6\delta$,
  where
  \begin{align*}
    \kappa (\subscr{x}{m}^*) =
    \frac{\subscr{\sigma}{max} (\subscr{x}{m} - \subscr{x}{m}^*)}{
    \subscr{\sigma}{min} (\subscr{x}{m}^*)},% c_1 = ..., c_2 = ...,
  \end{align*}
  where the constants $c_1$ and $c_2$ are independent of $N$ and
  are defined in \eqref{eq: c1 and c2},
  $\rho<1$, and $\delta \in [0,1/6]$. \oprocend
\end{theorem}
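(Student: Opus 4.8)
The plan is to express the data-driven gain $K_{\text{LQR}}^{\text{D}} = \subscr{u}{m}\subscr{x}{m}^\dagger$ as a perturbation of the true gain $\subscr{K}{LQR}$ and control that perturbation by separating two sources of error: the finite-sample error in the empirical operator $M = X\begin{bmatrix} X_0 \\ U\end{bmatrix}^\dagger$, which estimates the true input-to-state map, and the truncation error coming from the finite horizon $T$ (the system being only asymptotically, not exactly, brought to the LQR trajectory). First I would recall from \cite{AAAM-VK-VK-FP:22} and standard LQR theory that the noiseless closed-loop trajectory $\subscr{x}{v}^*$ generated by $u^*(t)=\subscr{K}{LQR}x(t)$ satisfies $\subscr{u}{m}^* = \subscr{K}{LQR}\,\subscr{x}{m}^*$ exactly, so that $\subscr{K}{LQR} = \subscr{u}{m}^*(\subscr{x}{m}^*)^\dagger$ whenever $\subscr{x}{m}^*$ has full row rank (which holds for $N$ large enough / $x_0$ generic, via Assumption~\ref{assump: data input} and controllability in (A2)). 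The quantity $\subscr{\sigma}{min}(\subscr{x}{m}^*)$ in the bound is exactly this rank margin.

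Next I would invoke the standard perturbation bound for pseudoinverses: if $\subscr{x}{m} = \subscr{x}{m}^* + E$ with $\|E\|_2 = \subscr{\sigma}{max}(\subscr{x}{m}-\subscr{x}{m}^*)$ and $\kappa(\subscr{x}{m}^*) = \|E\|_2/\subscr{\sigma}{min}(\subscr{x}{m}^*) < 1$, then
\begin{align*}
  \|\subscr{K}{LQR} - K_{\text{LQR}}^{\text{D}}\|_2
  = \|\subscr{u}{m}^*(\subscr{x}{m}^*)^\dagger - \subscr{u}{m}\subscr{x}{m}^\dagger\|_2
  \le \frac{C\,\|\subscr{x}{m} - \subscr{x}{m}^*\|_2 + \|\subscr{u}{m}-\subscr{u}{m}^*\|_2}{\subscr{\sigma}{min}(\subscr{x}{m}^*)\,(1-\kappa(\subscr{x}{m}^*))},
\end{align*}
after bounding $\|\subscr{u}{m}^*\|_2 \le \|\subscr{K}{LQR}\|_2\|\subscr{x}{m}^*\|_2$ and absorbing such factors. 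This reduces everything to bounding $\|\subscr{x}{m} - \subscr{x}{m}^*\|_2$ and $\|\subscr{u}{m} - \subscr{u}{m}^*\|_2$, i.e., how far the trajectory $\begin{bmatrix}\subscr{u}{v}\\\subscr{x}{v}\end{bmatrix} = \begin{bmatrix}H\\M\end{bmatrix}P^{-1/2}(\cdots)^\dagger x_0$ from \eqref{eq: LQR trajectories} is from the true finite-horizon optimal (hence near-LQR) trajectory. Here I would split: replace the empirical $M$ by its population limit $\overbar{M}$ (the true map from $(x(0),u)$ to the stacked state trajectory, which exists by (A2)), and correspondingly $P$ by $\overbar{P}$; the formula with $\overbar{M},\overbar{P}$ returns precisely the finite-horizon LQR trajectory from $x_0$, and classical Riccati-contraction arguments give that its deviation from the infinite-horizon LQR trajectory $\subscr{x}{v}^*$ decays as $c_2\rho^T$ with $\rho$ the closed-loop spectral radius (this is where (A2)'s stabilizability/detectability is used). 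The remaining term $\|M-\overbar{M}\|_2$ is the finite-sample error: $\begin{bmatrix}X_0\\U\end{bmatrix}$ has i.i.d.\ Gaussian columns by Assumption~\ref{assump: data input} and (A1), so by a matrix concentration / covariance-estimation bound (the same tool used in \cite{SD-HM-NM-BR-ST:20}), with probability at least $1-6\delta$ (the six bad events being the concentration failures for the relevant Gram matrices and cross-covariances), $\|M - \overbar{M}\|_2 \le c_1/\sqrt{N}$ for $N$ large; propagating this through $P^{-1/2}(\cdots)^\dagger$ (Lipschitz in $M$ on the event that the Gram matrix is well-conditioned) yields the $c_1/\sqrt{N}$ term. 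Collecting the two contributions gives $\frac{1}{\subscr{\sigma}{min}(\subscr{x}{m}^*)(1-\kappa(\subscr{x}{m}^*))}\big(\tfrac{c_1}{\sqrt N} + c_2\rho^T\big)$, with $c_1,c_2$ as in \eqref{eq: c1 and c2}.

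The main obstacle I anticipate is the propagation step: $P$ depends quadratically on $M$ and then one inverts its square root and takes a pseudoinverse of $[I_n\ 0]P^{-1/2}$, so showing this composite map is Lipschitz in $M$ with an explicit constant — and that the constant is uniform over the high-probability event where $\|M-\overbar M\|_2$ is small — requires care; one must ensure $\overbar P \succ 0$ (from $R_u\succ 0$) and that the perturbed $P$ stays uniformly positive definite, then use the standard bounds $\|P^{-1/2} - \overbar P^{-1/2}\|$ and $\|A^\dagger - B^\dagger\|$ controlled by $\|P - \overbar P\|$ and the smallest singular value of $[I_n\ 0]\overbar P^{-1/2}$. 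Keeping all these constants explicit (they feed into $c_1$, $c_2$) and verifying the "sufficiently large $N$" threshold makes $\kappa(\subscr{x}{m}^*)<1$ and keeps $P$ invertible is the bookkeeping-heavy core of the argument; the rest is assembling known perturbation and concentration lemmas.
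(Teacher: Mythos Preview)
Your proposal is correct and follows essentially the same route as the paper's proof in Appendix~\ref{app: LQR}: reduce the gain error to a least-squares/pseudoinverse perturbation in $(\subscr{u}{m},\subscr{x}{m})$, then bound the trajectory deviations $\|\subscr{u}{v}-\subscr{u}{v}^*\|$ and $\|\subscr{x}{v}-\subscr{x}{v}^*\|$ by analyzing the sensitivity of the map \eqref{eq: LQR trajectories} to the noise-induced perturbation in the data, with Gaussian concentration (Lemmas~\ref{lemma: product of Gaussian matrices}--\ref{lemma: svd of Gaussian matrix}) supplying the $1/\sqrt{N}$ rate. Two minor differences in execution are worth noting. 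First, where you propose a direct Lipschitz argument for $M\mapsto P^{-1/2}\big([I_n\ 0]P^{-1/2}\big)^\dagger$, the paper instead uses a first-order Taylor expansion of that map (Fr\'echet differentiability, Lemma~\ref{lemma: jacobian bound} and Theorem~\ref{thrm: bound on LQR traj}), citing \cite{FC-GB-FP:22a} for the Jacobian bound; this sidesteps having to track uniform Lipschitz constants over the high-probability event. Second, the paper takes $\subscr{u}{v}^*,\subscr{x}{v}^*$ to be the \emph{finite-horizon} LQR optimum (what the clean-data formula returns exactly) and places the $\rho^T$ term in the residual $r=\subscr{u}{m}^*-\subscr{K}{LQR}\subscr{x}{m}^*$ of Wedin's least-squares perturbation theorem \cite{PAW:73}, whereas you take the infinite-horizon closed-loop trajectory and put $\rho^T$ into the trajectory deviation itself; both decompositions lead to the same final form.
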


\smallskip A proof of Theorem \ref{thm: LQR gain} is postponed to
Appendix \ref{app: LQR}. Some comments are in order. First, Theorem
\ref{thm: LQR gain} provides a direct, data-driven way to estimate the
LQR gain from noisy data, namely, $K_\text{LQR}^\text{D}$, and
characterizes the error between the true and the estimated gains. Such
error vanishes as the number ($N$) and length ($T$) of the
experimental trajectories grow.\footnote{The constant $c_1$, as well
  as other constants defined later in the paper, depend also on the
  horizon $T$. While a detailed characterization of the effects of
  this dependency requires a dedicated analysis, notice that our
  expressions remain consistent if $N$ grows sufficiently faster than
  $T$. The formulas in the paper quantify the error for finite choices
  of these two parameters.}  Further, the term
$\kappa (\subscr{x}{m}^*)$ also vanishes as the number of experimental
trajectories increases (see Theorem \ref{thrm: bound on LQR
  traj}). Second, the
vectors $\subscr{u}{v}$ and $\subscr{x}{v}$ contain an estimate of the
optimal input and state trajectories that minimize the LQR cost with
matrices $Q_x$ and $R_u$ for the system \eqref{eq: system} with
initial state $x_0$ and without process noise.  Notably, these
trajectories are estimated using the noisy dataset \eqref{eq:
  data}. Thus, this result extends the analysis in
\cite{FC-GB-FP:22}. Third, Theorem \ref{thm: LQR gain} is valid when
$N$ is sufficiently large. In particular, $N$ needs to be at least
large enough to satisfy $\kappa (\subscr{x}{m}^*) < 1$ (see Appendix
\ref{app: LQR} for other conditions on $N$). Also, the result holds
with probability $1-6\delta$, and the specific choice of $\delta$
affects the magnitude of the constant $c_1$. Fourth and finally,
although formulas with similar convergence rates for the estimation of
the LQR exist \cite{SD-HM-NM-BR-ST:20,HM-ST-BR:19}, Theorem \ref{thm:
  LQR gain} provides an alternative, direct, closed-form expression of
the gain, as opposed to indirect and optimization-based
approaches. This will allow us to estimate the LQG controller.
%\textcolor{red}{Moreover, the constant $c_1$ increases with
%  a rate of order $T^2$, while the term $c_2\rho^T$ vanishes as the
%  length ($T$) of the experimental trajectories grows}.
%
\begin{figure*}[!t]
  \centering
  \includegraphics[width=1\textwidth,trim={0cm 0cm 0cm
    0cm},clip]{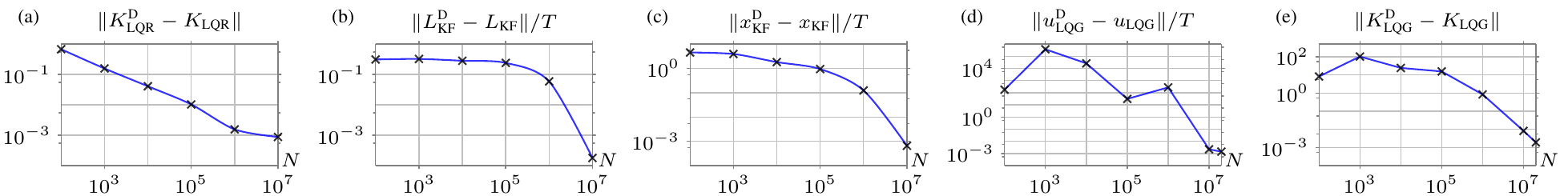}
  \caption{This figure shows the error between the data-driven and the model-based gains as a function of the size of the data in \eqref{eq: data} for the setting described in Example \ref{example: LQR}, \ref{example: KF}, and \ref{example: LQG}. Panel (a) shows the error between the model-based LQR gain and the data-based LQR gain obtained from Theorem~\ref{thm: LQR gain} as a function of $N$ for the setting described in Example \ref{example: LQR}. Panel (b) shows the error between the model-based Kalman filter and the data-based Kalman filter obtained from Theorem~\ref{thm: KF}, and panel (c) shows the error between the corresponding state estimates as a function of $N$ for the setting in Example \ref{example: KF}. Panel (d) shows the error between the model-based LQG input generated by \eqref{eq: LQR KF} and the data-based LQG input generated by \eqref{eq: LQG inputs} as a function of $N$. Panel (e) shows the error between the model-based LQG gain in \eqref{eq: static LQG} and the data-based LQG gain obtained from Theorem~\ref{thm: LQG} as a function of $N$ for the setting in Example \ref{example: LQG}. We observe that all the quantities in the plots decrease as the number of trajectories, $N$, increases, which agrees with our theoretical results.
}
    \label{fig: example_1}
\end{figure*}
\begin{example}{\bf \emph{(Estimating the LQR gain from noisy
      data)}}\label{example: LQR}
  Consider System \eqref{eq: system} with
\begin{align*}
A=
\begin{bmatrix}
 0.7 & 1.2 \\ 0 & 0.4
\end{bmatrix},\quad
B=
\begin{bmatrix}
 0\\1
\end{bmatrix}, \quad
C=
\begin{bmatrix}
 1 & 0
\end{bmatrix},
\end{align*}
$Q_x\!\!=\!\!5I_2$, $Q_w\!\!=\!\!2I_2$, $R_u\!=\!R_v\!=\!\Sigma_u\!\!=\!\!1$, and $\Sigma_0\!=\!I_2$. We
collect open-loop trajectories as in \eqref{eq: data} generated by
inputs satisfying Assumption \ref{assump: data input} with horizon
$T\!\!=\!\!50$. The model-based LQR gain is $K_\lqr\!\!=\!\![0.241\;\; 0.788]$. We use
Theorem~\ref{thm: LQR gain} to compute the data-driven LQR gain,
$K_\lqr^\D$ for different values of $N$. Fig.~\ref{fig: example_1}(a)
shows the error $\|K_\lqr^\D\!\!-\!K_\lqr\|$ as a function of the number of trajectories.~ \oprocend
%  \todo{To be written. Maybe use a running example that we can use for
%    the other results as well}

\end{example}

\smallskip

We now focus on estimating the Kalman filter from noisy data with
unknown system dynamics and noise statistics.

\begin{theorem}{\bf \emph{(Data-driven Kalman filter)}}\label{thm: KF}
  Let $U_t$ and $Y_t$ be the submatrices of $U$ and $Y$ in \eqref{eq:
    data} obtained by selecting only the inputs and outputs up to time
  $t$. Let
  \begin{align}\label{eq: DD KF}
    L_t^{\D} = X_t
    \begin{bmatrix}
      U_{t-1} \\ Y_t
    \end{bmatrix}^\dag,
  \end{align}
  where $X_t$ is as in~\eqref{eq: X0}. Then, for every $t \in [0,T]$,
  \begin{align}\label{eq: DD KF error}
    \left\| \subscr{x}{KF} (t) - L_t^{\D}
    \begin{bmatrix*}[c]
      u_0^{t-1} \\ y_0^t
    \end{bmatrix*}
    \right\|_2
     \le \frac{c_3}{\sqrt{N}}{\left\|
    \begin{bmatrix*}
      u_0^{t-1} \\ y_0^t
    \end{bmatrix*}\right\|}_2,
  \end{align}
  with probability at least $1-2\delta$, where $u_0^{t}$ and $y_0^t$
  are the vectors of inputs and outputs of~\eqref{eq: system},
  respectively, from time $0$ up to time $t$, $c_3$ is a constant
  independent of $N$ as defined in \eqref{eq: state estimate bound}, and $\delta \in [0,1/2]$.~\oprocend
\end{theorem}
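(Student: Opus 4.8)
The plan is to show that the Kalman filter estimate is, under the paper's Gaussian assumptions, an \emph{exact} linear function of the past inputs and outputs, and that $L_t^{\D}$ in \eqref{eq: DD KF} is precisely the ordinary least-squares estimate of the associated regression matrix from the $N$ i.i.d.\ experiments, so that \eqref{eq: DD KF error} reduces to a standard $1/\sqrt N$ concentration bound. First, fix $t\in[0,T]$ and set $\phi=\big[(u_0^{t-1})^{\transpose}\ (y_0^{t})^{\transpose}\big]^{\transpose}$. Under Assumptions (A1) and \ref{assump: data input} the triple $(x(t),u_0^{t-1},y_0^{t})$ is jointly Gaussian and zero mean, and $\Sigma_{\phi\phi}:=\E[\phi\phi^{\transpose}]\succ0$: its $u$-block equals $I_t\otimes\Sigma_u\succ0$ and its Schur complement is the conditional covariance of $y_0^t$ given $u_0^{t-1}$, which is lower bounded by $\blkdiag(R_v,\dots,R_v)\succ0$ since the measurement noise is independent of everything else. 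Because the inputs are independent of the process and measurement noise, the Kalman filter estimate coincides with the conditional mean $\E[x(t)\mid\phi]$, which for jointly Gaussian zero-mean variables is the linear map $\subscr{x}{KF}(t)=L_t\phi$ with $L_t=\Sigma_{x\phi}\Sigma_{\phi\phi}^{-1}$ and $\Sigma_{x\phi}:=\E[x(t)\phi^{\transpose}]$. Hence it suffices to bound $\|L_t-L_t^{\D}\|_2$.

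Second, write $e:=x(t)-L_t\phi$ for the Kalman innovation; it is Gaussian, zero mean, and independent of $\phi$ (so $\E[e\phi^{\transpose}]=0$), and stacking the $N$ experiments gives $X_t=L_t\Phi+E$, where $\Phi=\big[U_{t-1}^{\transpose}\ Y_t^{\transpose}\big]^{\transpose}$ and the columns of $E$ are i.i.d.\ copies of $e$, independent of $\Phi$. Since $\tfrac1N\Phi\Phi^{\transpose}\to\Sigma_{\phi\phi}\succ0$, for $N$ large $\Phi$ has full row rank with high probability, in which case $\Phi\Phi^{\dagger}=I$ and
\begin{equation*}
  L_t^{\D}=X_t\Phi^{\dagger}=L_t+E\Phi^{\dagger},\qquad\text{so}\qquad
  \|L_t-L_t^{\D}\|_2\le\Big\|\tfrac1N E\Phi^{\transpose}\Big\|_2\,\Big\|\big(\tfrac1N\Phi\Phi^{\transpose}\big)^{-1}\Big\|_2 .
\end{equation*}

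Third, I would control the two factors by the Gaussian sample-covariance concentration bounds already used for Theorem \ref{thm: LQR gain}: with probability at least $1-\delta$, $\big\|\tfrac1N E\Phi^{\transpose}\big\|_2=\big\|\tfrac1N E\Phi^{\transpose}-\E[e\phi^{\transpose}]\big\|_2\le \gamma/\sqrt N$; and with probability at least $1-\delta$, $\sigma_{\text{min}}\big(\tfrac1N\Phi\Phi^{\transpose}\big)\ge\tfrac12\sigma_{\text{min}}(\Sigma_{\phi\phi})$ by Weyl's inequality once $N$ exceeds the corresponding threshold. On the intersection of these two events, which has probability at least $1-2\delta$, $\|L_t-L_t^{\D}\|_2\le 2\gamma/\big(\sqrt N\,\sigma_{\text{min}}(\Sigma_{\phi\phi})\big)=:c_3/\sqrt N$, and since $\subscr{x}{KF}(t)=L_t\phi$ exactly, $\big\|\subscr{x}{KF}(t)-L_t^{\D}\phi\big\|_2=\big\|(L_t-L_t^{\D})\phi\big\|_2\le\|L_t-L_t^{\D}\|_2\,\|\phi\|_2$, which is \eqref{eq: DD KF error}. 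The bound is stated per fixed $t$; a uniform-in-$t$ version would follow from a union bound over $t\in\{0,\dots,T\}$.

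The main obstacle is the representation step: carefully arguing that the textbook Kalman recursion, which is derived treating the inputs as deterministic, returns the conditional mean $\E[x(t)\mid u_0^{t-1},y_0^{t}]$ when the inputs are themselves random — this uses the independence of inputs and noise, joint Gaussianity, and the fact that all signals are zero mean so that no affine term appears (this is also where the reformulation \eqref{eq: static LQG} and \cite{AAAM-VK-VK-FP:22} are relevant). The remaining work — sample-covariance concentration and the first-order bound $\big\|(\tfrac1N\Phi\Phi^{\transpose})^{-1}\big\|_2\le 2\|\Sigma_{\phi\phi}^{-1}\|_2$ — is routine, the only subtlety being that the ``$N$ sufficiently large'' regime must be the one in which $\Phi$ has full row rank and $\sigma_{\text{min}}(\tfrac1N\Phi\Phi^{\transpose})$ is bounded away from zero, so that $c_3$ in \eqref{eq: state estimate bound} is finite.
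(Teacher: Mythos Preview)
Your proposal is correct and matches the paper's approach: write $x(t)=L_t^{\kf}z_t+e(t)$ with the Kalman error $e(t)$ Gaussian and independent of $z_t$, stack across the $N$ experiments to obtain $L_t^{\D}-L_t^{\kf}=e_t Z_t^{\dagger}$, bound $\|e_t Z_t^{\dagger}\|_2$ by Gaussian concentration on the cross term and on the Gram inverse, and combine via a union bound to get the $1-2\delta$ probability and the final submultiplicative step $\|x_{\kf}(t)-L_t^{\D}z_t\|_2\le\|L_t^{\kf}-L_t^{\D}\|_2\|z_t\|_2$. The only cosmetic difference is that the paper whitens $Z_t=\Sigma_Z^{1/2}G$ and applies Lemmas~\ref{lemma: product of Gaussian matrices} and~\ref{lemma: svd of Gaussian matrix} directly to $\|e_tG^{\transpose}\|_2$ and $\|(GG^{\transpose})^{-1}\|_2$, rather than bounding the sample covariance via Weyl's inequality as you do.
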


  % \begin{align*}
  %   u_0^t =
  %   \begin{bmatrix}
  %     u(0) \\ \vdots \\ u(t) 
  %   \end{bmatrix} \text{ and }
  %   u_0^t =
  %   \begin{bmatrix}
  %     y(0) \\ \vdots \\ y(t)
  %   \end{bmatrix}.
  % \end{align*}

\smallskip A proof of Theorem \ref{thm: KF} is postponed to Appendix
\ref{app: KF}. Theorem \ref{thm: KF} provides a way to construct an
approximate Kalman filter using a finite set of experimental data,
without knowing the system dynamics and the statistics of the noise.
As can be seen from \eqref{eq: DD KF error}, the error vanishes with
rate $1/\sqrt{N}$ as the number of experimental data grows, showing
the consistency of the data-driven Kalman filter expressions~\eqref{eq: DD KF}.
%\textcolor{red}{Further, note that the constant
%  $c_3$ increases with a rate of order $\sqrt{t}\leq \sqrt{T}$ as $t$
%  grows.}

\begin{example}{\bf \emph{(Estimating the Kalman filter from noisy
      data)}}\label{example: KF} Following the setting introduced in
  Example \ref{example: LQR}, we use Theorem \ref{thm: KF} to obtain
  the data-driven Kalman filter, $L_\kf^{\D}$, and the corresponding
  data-driven state estimate, $x_\kf^{\D}$. Fig. \ref{fig:
    example_1}(b) and \ref{fig: example_1}(c) show the errors
  $\|L_\kf^\D\!\!-\!L_\kf\|$ and $\|x_\kf^\D\!\!-\!x_\kf\|$.~
  \oprocend
\end{example}
\smallskip

Theorems \ref{thm: LQR gain} and \ref{thm: KF} allow us to compute the
LQG inputs from time $0$ up to time $T$. In particular, recalling the
structure of the LQG inputs due to the separation principle
\cite{KZ-JCD-KG:96},
\begin{align}\label{eq: LQG inputs}
  u_{\dlqg}(t) = K_\text{LQR}^\text{D} \underbrace{L_t^{\D}
  \begin{bmatrix}
    u_\dlqg(0) \\ \vdots \\ u_\dlqg(t - 1) \\ y_\dlqg(0) \\ \vdots \\ y_\dlqg(t)
  \end{bmatrix}}_{x_\text{KF}^\text{D} (t)}
%\begin{align}\label{eq: LQG inputs}
%  u^*(t) = K_\text{LQR}^\text{D} L_t^{\D}
%  \begin{bmatrix}
%    u^*(0) \\ \vdots \\ u^*(t - 1) \\ y^*(0) \\ \vdots \\ y^*(t)
%  \end{bmatrix}
  ,
\end{align}
where $x_\text{KF}^\text{D}$ is the state estimate obtained using our
data-driven scheme. Fig. \ref{fig: example_1}(d) shows how these
data-driven inputs compare to the model-based LQG inputs as a function
of the amount of data. As expected, the performance gap between the
data-driven and the model-based schemes shrinks as the amount of data
increases. We next provide an estimate of the LQG gain \eqref{eq:
  static LQG}, which allows us to compute LQG inputs beyond the
horizon $T$ of the experimental trajectories. We start by collecting $M\geq n+nm+np$ closed-loop input-output
trajectories of system \eqref{eq: system} driven by the LQG inputs  %\margin{Ultimately, we don't need $T$ to grow, maybe. Let's discuss the issue of $T$}
generated from \eqref{eq: LQG inputs}. In particular,
\begin{align}\label{eq: cl data}
  {U}_\dlqg \!=\!\!
  \begin{bmatrix}
    u_{\dlqg}^{1} \!\!\!&\!\! \cdots \!\!&\!\!\! u_{\dlqg}^{M}
  \end{bmatrix}\!, ~
                                                                    Y_\dlqg\! =\!
                                                                    \begin{bmatrix}
                                                                      y_{\dlqg}^{1} \!\!\!&\!\! \cdots \!\!&\!\!\! y_{\dlqg}^{M}
                                                                    \end{bmatrix}\!,
\end{align}
where $y_{\dlqg}^i$ is the $i$-th output trajectory of \eqref{eq:
  system} generated by the LQG input $u_\dlqg^i$ in \eqref{eq: LQG
  inputs}. That is, for $i \in \until{M}$,
\begin{align*}
  u_\dlqg^{i}&\! =\!\!
         \begin{bmatrix}
           u_\dlqg^{i}(0) \\ \vdots \\ u_\dlqg^{i}(T\!-\!1)
         \end{bmatrix}\!\!, \quad
    y_\dlqg^{i}\! =\! \!
         \begin{bmatrix}
           y_\dlqg^{i}(0) \\ \vdots \\ y_\dlqg^{i}(T)
         \end{bmatrix}\! . %~\! \text{for } i =\! 1,\!\cdots\!, M.
\end{align*}

%Further, let $U^n_{\dlqg}$ and $Y_{\dlqg}^n$ be the submatrices of $U_\dlqg$ and $Y_\dlqg$ in \eqref{eq: cl data} obtained by selecting only the inputs from time $T-n$ up to time $T-1$ and the outputs from time $T-n+1$ up~to~time~$T$, respectively.
%

%Let $\left. u_{\dlqg}(t+n),\dots,u_{\dlqg}(T)\right.$ be the data-driven LQG inputs constructed as in \eqref{eq: LQG inputs} and $y_{\dlqg}(t+n),\dots,y_{\dlqg}(T)$ the corresponding outputs for $t=0,\cdots T-n$ and $T \geq n+nm+np$. Let the data-driven LQG gain~be
%  \begin{align*}
%    \textcolor{red}{K_\text{LQG}^\text{D} }=
%    &\textcolor{red}{\begin{bmatrix}
%      u_{\dlqg}(t+n) & \cdots & u_{\dlqg}(T)
%    \end{bmatrix}}
%                        \cdot\\
%    &\cdot
%      \textcolor{red}{\begin{bmatrix}
%        u_{\dlqg}(t) & \cdots & u_{\dlqg}(T-n)\\
%        \vdots & \cdots & \vdots\\
%        u_{\dlqg}(t+n-1) & \cdots & u_{\dlqg}(T - 1)\\
%        y_{\dlqg}(t+1) & \cdots & y_{\dlqg}(T-n+1)\\
%        \vdots & \cdots & \vdots\\
%        y_{\dlqg}(t+n) & \cdots & y_{\dlqg}(T )\\
%      \end{bmatrix}^\dag} .
%  \end{align*}

\begin{theorem}{\bf \emph{(Data-driven LQG gain)}}\label{thm: LQG}
  Let $U^n_{\dlqg}$ and $Y_{\dlqg}^n$ be the submatrices of $U_\dlqg$
  and $Y_\dlqg$ in \eqref{eq: cl data} obtained by selecting only the
  inputs from time $T-n$ up to time $T-1$ and the outputs from time
  $T-n+1$ up~to~time~$T$, respectively. Define the data-driven LQG
  gain as
  \begin{align*}
    K_\text{LQG}^\text{D} =&
                             K_\text{LQR}^\text{D} L_t^{\D}   
                             \underbrace{\begin{bmatrix}
                               U_\dlqg\\ Y_\dlqg
                             \end{bmatrix}
    % &\begin{bmatrix}
    %   u^1_{\dlqg}(T) & \cdots & u^M_{\dlqg}(T)
    % \end{bmatrix}
                                \begin{bmatrix}
                                  U^n_{\dlqg}\\
       Y^n_{\dlqg}
     \end{bmatrix}^\dag}_{c_4},
  \end{align*}
  % with
  % \begin{align*}
  %   \begin{bmatrix}
  %     u^1_{\dlqg}(T) & \cdots & u^M_{\dlqg}(T)
  %   \end{bmatrix}
  %                               = K_\text{LQR}^\text{D} L_t^{\D}   
  %                               \begin{bmatrix}
  %                                 U_\dlqg\\ Y_\dlqg
  %                               \end{bmatrix}.
  % \end{align*}   
  Then, the data-driven estimate of the LQG gain satisfies
  \begin{align}\label{eq: LQG bound}
    \| K_\text{LQG} - K_\text{LQG}^\text{D} \|_2 \leq 
    \|c_4\|_2\left(\frac{c_5+c_6\rho^T}{\sqrt{N}}  + c_7\rho^T\right),
  \end{align}
%  \begin{align}\label{eq: LQG bound}
%    \| K_\text{LQG} - K_\text{LQG}^\text{D} \|_2 \leq 
%    \frac{\|c_4\|_2}{\subscr{\sigma}{min}(\subscr{x}{m}^*)\left(1-
%                                                     \kappa(\subscr{x}{m}^*)\right)}\left(\frac{c_5+c_6\rho^T}{\sqrt{N}}  + c_7\rho^T\right),
%  \end{align}
  for sufficiently large $T$ and $N$ and probability at least
  $1-8\delta$, where the constants $c_5$, $c_6$, and $c_7$ are
  independent of $N$ and are defined in \eqref{eq: c5 c6 c7}, $\rho<1$, and $\delta \in [0,1/8]$.  ~\oprocend
\end{theorem}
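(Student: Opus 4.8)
The plan is to reduce the bound on $\|K_\text{LQG}-K_\text{LQG}^\text{D}\|_2$ to the two error estimates already proved in Theorems~\ref{thm: LQR gain} and~\ref{thm: KF}, plus one extra term measuring how far the closed-loop trajectories in~\eqref{eq: cl data} are from genuine optimal-LQG trajectories (they are generated by the \emph{approximate} controller~\eqref{eq: LQG inputs}, not by the true LQG controller). First I would record the model-based factorization dual to the definition of $K_\text{LQG}^\text{D}$. Writing $L_t$ for the model-based Kalman-filter gain, so that $\subscr{x}{KF}(t)=L_t[u_0^{t-1};\,y_0^{t}]$ (this $L_t$ is precisely what $L_t^{\D}$ estimates in Theorem~\ref{thm: KF}), the separation structure $u^*(t)=\subscr{K}{LQR}\subscr{x}{KF}(t)$ combined with the static characterization~\eqref{eq: static LQG} of \cite{AAAM-VK-VK-FP:22} yields $\subscr{K}{LQG}=\subscr{K}{LQR}L_t\bar c_4$, where $\bar c_4$ is the population/noise-free analogue of $c_4$; equivalently, along optimal runs the last-$n$ input--output window determines the current Kalman estimate, which is exactly what~\eqref{eq: static LQG} asserts. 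Existence of such a $\bar c_4$, and the fact that $c_4\to\bar c_4$, uses the hypothesis $M\ge n+nm+np$ together with a persistency-of-excitation argument guaranteeing that the window matrix $[U^n_{\dlqg};\,Y^n_{\dlqg}]$ has full row rank, so that its pseudo-inverse is a genuine right inverse.

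With this in place I would decompose
\[
K_\text{LQG}^\text{D}-K_\text{LQG}
=\big(K_\text{LQR}^\text{D} L_t^{\D}-\subscr{K}{LQR}L_t\big)c_4
+\subscr{K}{LQR}L_t\big(c_4-\bar c_4\big),
\]
and bound the pieces separately. For the first term, $\|K_\text{LQR}^\text{D} L_t^{\D}-\subscr{K}{LQR}L_t\|_2\le \|K_\text{LQR}^\text{D}-\subscr{K}{LQR}\|_2\|L_t^{\D}\|_2+\|\subscr{K}{LQR}\|_2\|L_t^{\D}-L_t\|_2$, and Theorems~\ref{thm: LQR gain} and~\ref{thm: KF} bound the two gain errors by $O(1/\sqrt N+\rho^T)$ and $O(1/\sqrt N)$; ``sufficiently large $N$'' is used here so that the $(1-\kappa(\subscr{x}{m}^*))^{-1}$ prefactor of Theorem~\ref{thm: LQR gain} is at most a constant, recalling $\kappa(\subscr{x}{m}^*)\to0$. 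For the second term I would propagate the controller error through the closed-loop dynamics~\eqref{eq: system}--\eqref{eq: LQG inputs}: conditioned on the above event the closed-loop experiment is a linear system in feedback with the \emph{fixed} perturbed gain $K_\text{LQR}^\text{D} L_t^{\D}$, so a standard linear-feedback robustness estimate bounds, sample path by sample path, the deviation of each realized trajectory from the optimal-LQG trajectory driven by the same noise realization; since~\eqref{eq: static LQG} is only an asymptotic (steady-state) identity, this deviation additionally carries a $\rho^T$ transient, which is why $T$ must be large and why~\eqref{eq: LQG bound} contains $c_7\rho^T$ and the mixed term $c_6\rho^T/\sqrt N$ (a cross-product of the first-stage error, which carries both $1/\sqrt N$ and $\rho^T$, with the reconstruction error). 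Finally I would factor out the data-dependent quantity $\|c_4\|_2$ (which we do not bound a priori, as it depends on the conditioning of the closed-loop window data) and take a union bound over the event of Theorem~\ref{thm: LQR gain} (probability $1-6\delta$) and that of Theorem~\ref{thm: KF} (probability $1-2\delta$), giving $1-8\delta$; the constants $c_5,c_6,c_7$ are then read off as combinations of $\|\subscr{K}{LQR}\|_2$, $\|L_t\|_2$, the relevant $\sigma_{\text{min}}$'s, and $c_1,c_2,c_3$.

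The main obstacle is the $c_4-\bar c_4$ term. Unlike the open-loop dataset~\eqref{eq: data}, the closed-loop dataset~\eqref{eq: cl data} is collected under a controller that itself depends on~\eqref{eq: data}, so the two randomness sources are coupled and resist a single clean concentration argument; conditioning on the good event of the first stage, as above, decouples them, but one must then still (i) control the pseudo-inverse of $[U^n_{\dlqg};\,Y^n_{\dlqg}]$ through a lower bound on its smallest singular value, which requires the process and measurement noise to excite all $n(m+p)$ directions of the length-$n$ window (this is where $M\ge n+nm+np$ and ``large $T$'' are needed), and (ii) make the linear-feedback perturbation bound quantitative enough that the resulting constants are genuinely independent of $N$, with their $T$-dependence entirely carried by $\rho^T$. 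A secondary, bookkeeping difficulty is that $K_\text{LQR}^\text{D}$ and $L_t^{\D}$ enter $K_\text{LQG}^\text{D}$ only through the composition $K_\text{LQR}^\text{D} L_t^{\D}c_4$, so their errors interact multiplicatively with $\|c_4\|_2$ and with each other, and the cross terms must be shown to be of the same order as the dominant $1/\sqrt N$ and $\rho^T$ terms rather than lower order.
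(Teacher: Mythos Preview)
Your first term is essentially the paper's entire argument; the second term $\subscr{K}{LQR}L_t(c_4-\bar c_4)$ is an artifact of your decomposition and the paper avoids it completely.

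The key point you are missing is that the static characterization~\eqref{eq: static LQG} does \emph{not} require the window $(u,y)$ to come from an optimal LQG run. What underlies~\eqref{eq: static LQG} is that the steady-state Kalman filter is an $n$-dimensional observer, so for large $T$ the map $z\mapsto \subscr{K}{LQR}L_T^{\kf}z$ factors through the last-$n$ input--output window of \emph{any} trajectory $z$; that factor is precisely $\subscr{K}{LQG}$. Applying this columnwise to the closed-loop data $Z=[U_{\dlqg};Y_{\dlqg}]$ (which was generated by the \emph{approximate} controller) gives $\subscr{K}{LQR}L_T^{\kf}Z=\subscr{K}{LQG}Z_n$, hence $\subscr{K}{LQG}=\subscr{K}{LQR}L_T^{\kf}\,ZZ_n^{\dagger}=\subscr{K}{LQR}L_T^{\kf}\,c_4$ directly, with the \emph{same} data-dependent $c_4$ that appears in $K_{\lqg}^{\D}$. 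No population quantity $\bar c_4$ is needed, and your ``main obstacle'' (analyzing $c_4-\bar c_4$, coupling of the two randomness sources, lower-bounding $\sigma_{\min}$ of the window matrix) simply disappears. The paper then writes
\[
K_{\lqg}^{\D}-\subscr{K}{LQG}=\big(K_{\lqr}^{\D}L_T^{\D}-\subscr{K}{LQR}L_T^{\kf}\big)c_4
=\big(\subscr{K}{LQR}\Delta L+\Delta K_{\lqr}L_T^{\kf}+\Delta K_{\lqr}\Delta L\big)c_4,
\]
bounds $\|\Delta K_{\lqr}\|$ by Theorem~\ref{thm: LQR gain} and $\|\Delta L\|=\|L_T^{\D}-L_T^{\kf}\|$ by the intermediate result inside the proof of Theorem~\ref{thm: KF} (the $O(1/\sqrt N)$ bound on the \emph{gain} error, not on the state-estimate error), and reads off $c_5,c_6,c_7$ from the three resulting products. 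The $\rho^T$ contribution comes solely from the $c_2\rho^T$ piece of Theorem~\ref{thm: LQR gain}; there is no separate closed-loop perturbation analysis.

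Your route could in principle be completed, but it is strictly harder: you would have to carry out the conditional closed-loop robustness argument you sketch, control the pseudo-inverse of the window matrix, and then discover that all of that extra work contributes a term that is dominated by what you already have. Recognizing that $\subscr{K}{LQG}=\subscr{K}{LQR}L_T^{\kf}c_4$ with the empirical $c_4$ collapses the proof to the three-term product bound above.
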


We postpone the proof of Theorem \ref{thm: LQG} to Appendix \ref{app:
  LQG}. Theorem \eqref{thm: LQG} provides a direct data-driven
expression of the LQG gain that converges with polynomial rate as the
experimental data increases. To the best of our knowledge, this result
is the first of its kind, and it provides a new way to compute the LQG
controller using offline experimental data and a finite number of
online experiments, without knowing or identifying the system and
noise matrices. 
%\textcolor{red}{Note that $c_5$ and $c_6$ in
%  \eqref{eq: LQG bound} increase with a rate of order $\sqrt{T^5}$ and
%  $\sqrt{T}$, respectively, while the term $c_7\rho^T$ vanishes as $T$
%  grows. Notice that the growth rates of $c_5$ and $c_6$ are inherited
%  from $c_1$ and $c_3$ in Theorem \ref{thm: LQR gain} and Theorem
%  \ref{thm: KF}, respectively.}
\smallskip
\begin{example}{\bf \emph{(Estimating the LQG gain from noisy
      data)}}\label{example: LQG} Following the setting introduced in
  Example \ref{example: LQR} and Example \ref{example: KF}, we use
  Theorem \ref{thm: LQG} to obtain the data-driven LQG gain,
  $K_\lqg^{\D}$. Fig. \ref{fig: example_1}(e) shows the error
  $\|K_\lqg^\D\!\!-\!K_\lqg\|$ as a function of the number of
  trajectories, $N$.~ \oprocend
\end{example}

\section{Conclusion}\label{sec: conclusion}
In this paper we derive direct data-driven expressions for the LQR
gain, Kalman filter, and LQG controller using a dataset of input,
state, output trajectories. We show the convergence of these
expressions as the size of the dataset increases, we characterize
their convergence rate, and we quantify the error incurred when using
a dataset of finite size. Our expressions are direct, as they do not
use a model of the system nor require the estimation of a model, and
provide new insights into the solution of canonical control and
estimation problems. Directions of future research include the direct
data-driven solution to $\mc H_2$ and $\mc H_\infty$ problems, as well
as the extension of the results to accomodate for incomplete,
heterogeneous and, possibly, corrupted datasets.
%\margin{Maybe also
%  investigate dependency on $t$ for the bounds and output-weighted
%  LQG. But I think we should leave it as is}

%\appendix
\section*{Appendix}
\renewcommand{\thesubsection}{A}
\subsection{Technical lemmas}
\begin{appxlem}{\bf \emph{(Product of Gaussian matrices \cite[Lemma 1]{SD-HM-NM-BR-ST:20})}}\label{lemma: product of Gaussian matrices}
Let $A\!=\![a_1,\cdots\!, a_N]$ and $B\!=\![b_1,\cdots\!, b_N]$, where $\left. a_i\in \mathbb{R}^n\right.$ and $b_i \in \mathbb{R}^m$ are independent random vectors with $a_i \sim \mathcal{N}(0,\Sigma_a)$ and $b_i \sim \mathcal{N}(0,\Sigma_b)$ for $i=1,\cdots,N$. Let $\delta \in [0,1]$ and $N\geq 2(n+m) \log{(1/\delta)}$. Then, with probability at least $1-\delta$
\begin{align*}
{\| AB^{\T}\|}_2\leq 4 {\|\Sigma_a\|}_2^{1/2}{\|\Sigma_b\|}_2^{1/2} \sqrt{N(n+m)\log{(9/\delta)}}.
\end{align*}~\oprocend
%\begin{proof}
%\end{proof}
\end{appxlem}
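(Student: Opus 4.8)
The plan is to prove the bound in two stages: a whitening reduction to standard Gaussian matrices, followed by an $\varepsilon$-net argument combined with a sharp single-direction tail estimate. For the reduction I would write $a_i = \Sigma_a^{1/2} g_i$ and $b_i = \Sigma_b^{1/2} h_i$, where $g_i \sim \mc N(0, I_n)$ and $h_i \sim \mc N(0, I_m)$ are standard and mutually independent. Collecting these into $G = [g_1 \cdots g_N]$ and $H = [h_1 \cdots h_N]$ gives $AB^{\T} = \Sigma_a^{1/2} G H^{\T} \Sigma_b^{1/2}$, so by submultiplicativity of the spectral norm $\|AB^{\T}\|_2 \le \|\Sigma_a\|_2^{1/2}\|\Sigma_b\|_2^{1/2}\,\|GH^{\T}\|_2$. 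It therefore suffices to show $\|GH^{\T}\|_2 \le 4\sqrt{N(n+m)\log(9/\delta)}$ with probability at least $1-\delta$, which isolates the dimension- and sample-dependent part of the estimate.

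Next I would discretize the variational characterization $\|GH^{\T}\|_2 = \max_{\|u\|=\|v\|=1}\langle G^{\T} u, H^{\T} v\rangle$. Taking $\tfrac14$-nets $\mc N_u \subset S^{n-1}$ and $\mc N_v \subset S^{m-1}$, whose cardinalities obey $|\mc N_u| \le 9^n$ and $|\mc N_v| \le 9^m$ (since $1+2/\varepsilon = 9$ at $\varepsilon = \tfrac14$), a routine approximation argument gives $\|GH^{\T}\|_2 \le 2\max_{u\in\mc N_u, v\in\mc N_v}\langle G^{\T} u, H^{\T} v\rangle$ (the factor $1/(1-2\varepsilon)=2$). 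The key observation is that for fixed unit vectors $u,v$ the vectors $p := G^{\T} u$ and $q := H^{\T} v$ are independent $\mc N(0, I_N)$ random vectors, so the scalar $X_{u,v} = \langle p, q\rangle = \sum_{j=1}^N p_j q_j$ is a sum of $N$ independent products of standard Gaussians.

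For the single-direction tail I would compute the moment generating function exactly: conditioning on $q$ and using the chi-square MGF gives $\E[e^{\lambda X_{u,v}}] = (1-\lambda^2)^{-N/2}$ for $|\lambda|<1$. A Chernoff bound together with $-\tfrac12\log(1-\lambda^2)\le\lambda^2$ on the sub-Gaussian range then yields $\Pr[X_{u,v} > t]\le e^{-t^2/(4N)}$, where the hypothesis $N\ge 2(n+m)\log(1/\delta)$ is of the order needed to keep the optimizing $\lambda = t/(2N)$ inside this range. A union bound over the $\le 9^{n+m}$ net points gives $\Pr[\max_{u,v} X_{u,v} > t] \le 9^{n+m} e^{-t^2/(4N)}$, and choosing $t = 2\sqrt{N(n+m)\log(9/\delta)}$ makes the right-hand side at most $\delta$ because $(n+m)\log 9 + \log(1/\delta) \le (n+m)\log(9/\delta)$. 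Multiplying by the net factor $2$ and reinstating the covariance factors delivers exactly the constant $4$.

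The main obstacle is quantitative rather than conceptual: the precise constant $4$ emerges only from a careful balancing of the net radius (choosing $\varepsilon=\tfrac14$ so that the approximation factor $2$ and the covering-number base $9$ are both pinned down) against the exact exponent $t^2/(4N)$ from the Gaussian-product MGF. I would also verify that the one-sided net estimate suffices, which it does because the spectral norm is the supremum of a nonnegative bilinear form, and that the stated sample-size condition places us in the sub-Gaussian tail regime rather than the heavier sub-exponential one that would otherwise arise for products of Gaussians.
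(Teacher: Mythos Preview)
The paper does not give its own proof of this lemma: it is simply quoted, with attribution, from \cite[Lemma~1]{SD-HM-NM-BR-ST:20}. Your whitening reduction followed by a $\tfrac14$-net argument and the exact MGF $\E[e^{\lambda\langle p,q\rangle}]=(1-\lambda^2)^{-N/2}$ is precisely the standard route (and is how the cited reference establishes the result), so there is nothing to compare against in the present paper beyond the citation itself.

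One small point worth tightening when you write it out in full: the choice $t=2\sqrt{N(n+m)\log(9/\delta)}$ gives $\lambda=t/(2N)$ with $\lambda^2=(n+m)\log(9/\delta)/N$, and the inequality $-\tfrac12\log(1-\lambda^2)\le\lambda^2$ holds only for $\lambda^2$ below a threshold strictly less than~$1$ (roughly $0.8$). The stated hypothesis $N\ge 2(n+m)\log(1/\delta)$ guarantees this only when $\delta$ is not too close to~$1$; for $\delta$ near~$1$ the statement is essentially vacuous anyway, but you should either note this or appeal directly to a Bernstein-type sub-exponential tail to cover the full range $\delta\in[0,1]$ cleanly.
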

\begin{appxlem}{\bf \emph{(Singular values of a Gaussian matrix)}}\label{lemma: svd of Gaussian matrix} 
  Let $\delta \in [0,1]$, and let $A\in \mathbb{R}^{n\times N}$ be a
  random matrix with independent entries distributed as %\margin{Is copiesthe right word?}
  $\mathcal{N}(0,1)$. Then, for
  $N\geq 8n+16\log{(1/\delta)}$, each of the following inequalities hold with probability probability at least $1-\delta$
\begin{align*}
 \subscr{\sigma}{min}(A) \geq \sqrt{N}/2, \qquad \subscr{\sigma}{max}(A) \leq 3\sqrt{N}/2,
\end{align*}
where $\subscr{\sigma}{min}$ ($\subscr{\sigma}{max}$) is the
smallest (largest) singular~value. \oprocend
\begin{proof}
  For notational convenience, we use $\subscr{\sigma}{min}$,
  $\subscr{\sigma}{max}$, and $\delta'$ to denote
  $\subscr{\sigma}{min}(A)$, $\subscr{\sigma}{max}(A)$, and
  $2\log{(1/\delta)}$, respectively. From \cite[Corollary
  5.35]{RV:10}, we have each of the following inequalities holds with probability at least $1-\delta$
\begin{align}\label{eq: svd inequality}
  \subscr{\sigma}{min} \geq \sqrt{N}-\sqrt{n} - \sqrt{\delta'}, \quad 
  \subscr{ \sigma}{max}\! \leq\! \sqrt{N} \!+ \sqrt{n} + \sqrt{\delta'}.
\end{align}
Assume that
$N\geq 8n +8\delta'$. Then,
\begin{align}\label{eq: N inequality}
 \sqrt{N}/2 \geq \sqrt{n} +\sqrt{\delta'},
\end{align}
where we have used the inequality $2(a^2+b^2) \geq (a+b)^2$. The proof
follows by substituting \eqref{eq: N inequality} into \eqref{eq: svd
  inequality}.
\end{proof}
\end{appxlem}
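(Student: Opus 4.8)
The plan is to reduce the statement to a standard non-asymptotic concentration bound for the extreme singular values of a Gaussian matrix, and then simplify its tail parameter into the stated threshold on $N$ by elementary algebra. Concretely, I would invoke \cite[Corollary 5.35]{RV:10}: if $G \in \mathbb{R}^{N \times n}$ is tall, $N \ge n$, with i.i.d.\ $\mathcal{N}(0,1)$ entries, then for every $t \ge 0$ each of the inequalities $\sigma_{\min}(G) \ge \sqrt{N} - \sqrt{n} - t$ and $\sigma_{\max}(G) \le \sqrt{N} + \sqrt{n} + t$ holds with probability at least $1 - e^{-t^2/2}$. Since $A$ and $A^\transpose$ have the same singular values and $A^\transpose \in \mathbb{R}^{N \times n}$ with $N \ge n$ throughout the claimed range of $N$, these bounds transfer verbatim to $A$.

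Next I would set $t = \sqrt{2\log(1/\delta)}$, so that $e^{-t^2/2} = \delta$ and each of the two inequalities holds with probability at least $1 - \delta$ (no union bound is needed, since the lemma asks only for each bound separately). Writing $\delta' = 2\log(1/\delta)$, it then suffices to establish $\sqrt{N}/2 \ge \sqrt{n} + \sqrt{\delta'}$: substituting this into the two tail bounds gives $\sigma_{\min}(A) \ge \sqrt{N} - \sqrt{N}/2 = \sqrt{N}/2$ and $\sigma_{\max}(A) \le \sqrt{N} + \sqrt{N}/2 = 3\sqrt{N}/2$, which is exactly the claim. To get $\sqrt{N}/2 \ge \sqrt{n} + \sqrt{\delta'}$ I would square and use $(a+b)^2 \le 2(a^2+b^2)$, so that $(\sqrt{n} + \sqrt{\delta'})^2 \le 2n + 2\delta'$; hence the hypothesis $N \ge 8n + 8\delta' = 8n + 16\log(1/\delta)$ gives $2n + 2\delta' \le N/4$, i.e.\ $\sqrt{n} + \sqrt{\delta'} \le \sqrt{N}/2$, as desired.

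I do not expect a genuine obstacle here. The only points that require care are matching the tall/fat orientation convention of the cited bound (handled by passing to $A^\transpose$ and using singular-value symmetry), keeping the one-sided tail probability at $e^{-t^2/2}$ per inequality rather than $2e^{-t^2/2}$ for both simultaneously, and the slightly fiddly step of converting the Gaussian-tail parameter $t$ into a sample-size condition that is linear in $n$ and in $\log(1/\delta)$. If a self-contained argument were preferred instead, the same conclusion follows from the $1$-Lipschitz Gaussian concentration of the maps $A \mapsto \sigma_{\min}(A)$ and $A \mapsto \sigma_{\max}(A)$ together with the mean bounds $\sqrt{N} - \sqrt{n} \le \mathbb{E}[\sigma_{\min}(A^\transpose)]$ and $\mathbb{E}[\sigma_{\max}(A^\transpose)] \le \sqrt{N} + \sqrt{n}$ (Slepian--Gordon), but invoking the packaged corollary is the most economical route.
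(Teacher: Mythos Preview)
Your proposal is correct and follows essentially the same route as the paper's proof: invoke \cite[Corollary 5.35]{RV:10} with $t=\sqrt{2\log(1/\delta)}$, set $\delta'=2\log(1/\delta)$, and use $(a+b)^2\le 2(a^2+b^2)$ together with $N\ge 8n+8\delta'$ to obtain $\sqrt{N}/2\ge\sqrt{n}+\sqrt{\delta'}$. Your additional remarks on the tall/fat orientation and the per-inequality (rather than joint) probability are accurate elaborations but not departures from the paper's argument.
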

%
%
%\begin{appxlem}{\bf \emph{(Singular values of a Gaussian matrix)}}\label{lemma: svd of Gaussian matrix} 
%  Let $\delta \in [0,1]$, and let $A\in \mathbb{R}^{n\times N}$ be a
%  random matrix with independent entries distributed as %\margin{Is copiesthe right word?}
%  $\mathcal{N}(0,1)$. Then, for
%  $N\geq 8n+16\log{(2/\delta)}$ and probability at least $1-\delta$
%%
%\begin{align*}
% \sqrt{N}/2 \leq \subscr{\sigma}{min}(A) \leq \subscr{\sigma}{max}(A) \leq 3\sqrt{N}/2,
%\end{align*}
%%
%where $\subscr{\sigma}{min}$ ($\subscr{\sigma}{max}$) is the
%smallest (largest) singular~value. \oprocend
%\begin{proof}
%  For notational convenience, we use $\subscr{\sigma}{min}$,
%  $\subscr{\sigma}{max}$, and $\delta'$ to denote
%  $\subscr{\sigma}{min}(A)$, $\subscr{\sigma}{max}(A)$, and
%  $2\log{(2/\delta)}$, respectively. From \cite[Corollary
%  5.35]{RV:10}, we have
%%
%\begin{align}\label{eq: svd inequality}
%  \sqrt{N}-\sqrt{n} - \sqrt{\delta'} \leq\! \subscr{\sigma}{min} \leq
%  \subscr{ \sigma}{max}\! \leq\! \sqrt{N} \!+ \sqrt{n} + \sqrt{\delta'}.
%\end{align}
%%
%with probability at least $1-\delta$. Assume that
%$N\geq 8n +8\delta'$. Then
%%
%\begin{align}\label{eq: N inequality}
% \sqrt{N}/2 \geq \sqrt{n} +\sqrt{\delta'},
%\end{align}
%where we have used the inequality $2(a^2+b^2) \geq (a+b)^2$. The proof
%follows by substituting \eqref{eq: N inequality} into \eqref{eq: svd
%  inequality}.
%\end{proof}
%\end{appxlem}
\renewcommand{\thesubsection}{B}
\subsection{Proof of Theorem \ref{thm: LQR gain}} \label{app: LQR}
Let $\subscr{u}{v}^* \in \real^{mT}$ and
$\subscr{x}{v}^* \in \real^{nT}$ be the optimal LQR trajectories of
\eqref{eq: system} from the initial state $x_0$. Then,
$\subscr{K}{LQR} = \subscr{u}{m}^* \subscr{x}{m}^{*^\dag}$
asymptotically as the control horizon $T$ grows. Further, from
\cite{FC-GB-FP:22,FC-GB-FP:22a}, the trajectories $\subscr{u}{v}^*$ and
$\subscr{x}{v}^*$ can be obtained using \eqref{eq: LQR trajectories}
when the state data is not corrupted by the process noise. Let
$\subscr{X}{clean}$ be such data, that is, the state trajectories of
\eqref{eq: system} with inputs $U$ and noise $w(t) = 0$ at all
times. Notice that in our setting $X$ is different from
$\subscr{X}{clean}$ since the process noise is nonzero when the data
is collected. Because of this deviation in the data, the vectors
$\subscr{u}{v}$ and $\subscr{x}{v}$ in \eqref{eq: LQR trajectories}
are a perturbed version of the optimal trajectories $\subscr{u}{v}^*$
and $\subscr{x}{v}^*$. Accordingly,
$K_\text{LQR}^\text{D} = \subscr{u}{m} \subscr{x}{m}^{\dag}$ is a
perturbed version of $\subscr{K}{LQR}$. To quantify the deviation
between $K_\text{LQR}^\text{D}$ and $\subscr{K}{LQR}$, we quantify (i)
the deviation in the data induced by the process noise, (ii) the
sensitivity of the map \eqref{eq: LQR trajectories} that generates LQR
trajectories, and (iii) how the induced errors propagate to compute
$K_\text{LQR}^\text{D}$.

\smallskip
\noindent
\emph{(i) Data deviation induced by the process noise.} Note that
\begin{align}\label{eq: data evolution2}
  X=\underbrace{\left[\begin{array}{c;{2pt/2pt}c}
                        O & F_u
                      \end{array}\right]}_{ F}
                            \underbrace{\left[\begin{array}{c}
                                                X_0\\
                                                \hdashline[2pt/2pt]
                                                U
                                              \end{array}\right]}_{\overline{U}}
  + F_wW,
\end{align}
where $W \in \mathbb{R}^{nT\times N}$ is a matrix that contains the
corresponding $N$ process noise realizations of horizon $T-1$, and
\begin{align*}
O= \left[\begin{smallarray}{c}
      I_n\\
      A\\
      \vdots \\
      A^T
    \end{smallarray}\right],~
 F_u=\left[\begin{smallarray}{ccc}
      0    & \cdots & 0\\
      B & \cdots & 0\\
      \vdots  & \ddots & \vdots \\
      A^{T-1}B & \cdots & B
    \end{smallarray}\right], ~
     F_w=\left[\begin{smallarray}{ccc}
      0    & \cdots & 0\\
      I_n & \cdots & 0\\
      \vdots  & \ddots & \vdots \\
      A^{T-1} & \cdots & I_n
    \end{smallarray}\right] .
\end{align*}
Note that $\subscr{X}{clean}=F\overbar{U}$. Let the data matrices in
\eqref{eq: data} and \eqref{eq: X0} be partitioned as
\begin{align}\label{eq: data partitioned}
  U =
  \begin{bmatrix}
    \subscr{U}{d} & \subscr{U}{n}
  \end{bmatrix},
                    X =
                    \begin{bmatrix}
                      \subscr{X}{d} & \subscr{X}{n}
                    \end{bmatrix},
                    X_0 =
                    \begin{bmatrix}
                      X_{0,\text{d}} & X_{0,\text{n}}
                    \end{bmatrix},                                   
\end{align}
where $\subscr{U}{d}$, $\subscr{X}{d}$, and $X_{0,\text{d}}$ contain
the first $N_\dd \geq mT+n$ columns of $U$, $X$, and $X_0$,
respectively, and let
$\overbar{U}=[\subscr{\overbar{U}}{d},\subscr{\overbar{U}}{n}]$ be
partitioned similarly. For notational convenience, we define
$Q_T= ( I_{T+1}\otimes Q_x )$ and
$R_T= \blkdiag(0_{n\times n }, I_{T} \otimes R_u )$. Noting that $\Ubar_\dd {\Ubar_\dd}^\dagger=I_{n+mT}$, we rewrite
$\subscr{u}{v}$ in \eqref{eq: LQR trajectories} as
\begin{align}\label{eq: LQR input}
      \subscr{u}{v}=H P^{-1/2}\left(
    \begin{bmatrix}
      I_n \!\!&\!\! 0_{n\times mT}
    \end{bmatrix}
            P^{-1/2}\right)^{\dagger} x_0 ,
  \end{align}
with
\begin{align}\label{eq: LQR P}
 P&\!=\!\left(\subscr{\widetilde{X}}{c}\subscr{\overbar{U}}{d}^{\dagger}\right)^{\T}\!Q_T \!\left(\subscr{\widetilde{X}}{c}\subscr{\overbar{U}}{d}^{\dagger}\right)\! +\! R_T,~\text{and}~\subscr{\widetilde{X}}{c} =\! X\overbar{U}^{\dagger}\subscr{\overbar{U}}{d}.
\end{align}
Further, let 
\begin{align}\label{eq: Xc}
\subscr{X}{c}=
  \subscr{X}{clean}\overbar{U}^{\dagger}\subscr{\overbar{U}}{d} \quad
  \text{and} \quad \Delta_X= \subscr{\widetilde{X}}{c}
  -\subscr{X}{c} .
\end{align}
Notice that if the process noise, $W$, is zero, then $\Delta_X=0$ and
$\subscr{\widetilde{X}}{c}=\subscr{X}{c}$ and, from \eqref{eq: LQR
  trajectories}, $\subscr{u}{v}=\subscr{u}{v}^*$ and
$\subscr{x}{v}=\subscr{x}{v}^*$. Thus, we use $\Delta_X$ as a proxy
for the deviation between $X$ and $\subscr{X}{clean}$, which is
induced by the process noise, $F_wW$. The next Lemma provides a
non-asymptotic upper bound to ${\|\Delta_X\|}_2$.
\smallskip
\begin{appxlem}{\bf \emph{(Non-asymptotic bound on ${\|\Delta_X\|}_2$)}}\label{lemma: Delta_X bound}
  Let $\Delta_X$ be as in \eqref{eq: Xc}, and let
  $\delta \in [0,1/3]$. Assume that
  $\left.N> \max{\{N_1,N_\dd\}}\right.$, with
  $N_1=2\left((n+m)T+n\right)\log{\left(1/\delta\right)}$ and
  $N_\dd\geq 8(mT+n) +16
  \log{\left(1/\delta\right)}$.
%
%\begin{align*}
% N_1&=2\left((n+m)T+n\right)\log{\left(2/\delta\right)},\\
%\textcolor{red}{N_\dd}&\geq 8(mT+n) +16 \log{\left(2/\delta\right)}.
%\end{align*}
%
Then, with probability at least $1-3\delta$,
\begin{align}\label{eq: X_c error bound}
 {\|\Delta_X\|}_2 \leq d_1\sqrt{\frac{N_\dd \left((n+m)T+n\right)\log{\left(9/\delta\right)}}{N}},
\end{align}
where $d_1= 24 \|F_w\|_2 \|\overbar{Q}_w\|_2^{1/2}$ and
$\overbar{Q}_w= I_{T}\otimes Q_w$.  \oprocend
\begin{proof}
  Let $\overbar{U}=\Sigma_{\overbar{u}}^{1/2}Z$ and
  $\subscr{\overbar{U}}{d}=\Sigma_{\overbar{u}}^{1/2}\subscr{Z}{d}$,
  where
  $\Sigma_{\overline{u}}=
  \blkdiag{\left(\Sigma_0,I_{T}\otimes \Sigma_u\right)}$,
  $Z \in \mathbb{R}^{n+mT \times N}$ is a random matrix whose columns
  are independent copies of $\mathcal{N}\sim(0,I_{n+mT})$, and
  $\subscr{Z}{d}$ contains the first $N_\dd$
  columns of $Z$. From \eqref{eq: LQR P}, \eqref{eq: Xc},
\begin{align*}
%\begin{split}
 {\|\Delta_X\|}_2 &\!=\! {\|F_w \!W \overbar{U}^{\T}\! (\overbar{U}\overbar{U}^{\T})^{-1} \subscr{\overbar{U}}{d} \|}_2\!=\! {\|F_w\! W\! Z^{\T}\! (ZZ^{\T})^{-1} \subscr{Z}{d} \|}_2\\
 &\leq {\|F_w\|}_2 {\|W Z^{\T}\|}_2  {\|(ZZ^{\T})^{-1}\|}_2  {\|\subscr{Z}{d}\|}_2.
% \end{split}
\end{align*}
The proof follows by using Lemma \ref{lemma: product of Gaussian matrices} to bound ${\|W Z^{\T}\|}_2$, Lemma \ref{lemma: svd of Gaussian matrix} to bound ${\|(ZZ^{\T})^{-1}\|}_2$ and ${\|\subscr{Z}{d}\|}_2$, and using the union bound to compute the probability.
\end{proof}
\end{appxlem}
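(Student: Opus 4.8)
\emph{Approach.} The plan is to reduce $\Delta_X$ to an explicit product of matrices in which the process-noise matrix $W$ appears linearly, whiten the Gaussian regressor matrix $\overbar{U}$ so that it becomes a matrix with i.i.d.\ standard-normal entries, and then bound the resulting product of \emph{independent} Gaussian matrices by invoking Lemma~\ref{lemma: product of Gaussian matrices} and Lemma~\ref{lemma: svd of Gaussian matrix}, closing with a union bound over the three events they produce.

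\emph{Closed form and whitening.} First I would use \eqref{eq: data evolution2}: since $X=F\overbar{U}+F_wW$ while $\subscr{X}{clean}=F\overbar{U}$, we have $X-\subscr{X}{clean}=F_wW$. Substituting this into \eqref{eq: Xc} and writing the right pseudoinverse as $\overbar{U}^{\dagger}=\overbar{U}^{\T}(\overbar{U}\overbar{U}^{\T})^{-1}$ (which exists almost surely since $\overbar{U}$ is fat of full row rank) gives $\Delta_X=(X-\subscr{X}{clean})\overbar{U}^{\dagger}\subscr{\overbar{U}}{d}=F_wW\overbar{U}^{\T}(\overbar{U}\overbar{U}^{\T})^{-1}\subscr{\overbar{U}}{d}$. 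Next I would substitute $\overbar{U}=\Sigma_{\overbar{u}}^{1/2}Z$ and $\subscr{\overbar{U}}{d}=\Sigma_{\overbar{u}}^{1/2}\subscr{Z}{d}$, with $\Sigma_{\overbar{u}}=\blkdiag(\Sigma_0,I_T\otimes\Sigma_u)$ and $Z$ having i.i.d.\ $\mathcal{N}(0,I_{n+mT})$ columns; this factorization is legitimate because, by (A1), $x(0)\sim\mathcal{N}(0,\Sigma_0)$, by Assumption~\ref{assump: data input} the inputs are i.i.d.\ $\mathcal{N}(0,\Sigma_u)$, and the $N$ experiments are independent. The left factors $\Sigma_{\overbar{u}}^{1/2}$ cancel exactly, leaving $\Delta_X=F_wWZ^{\T}(ZZ^{\T})^{-1}\subscr{Z}{d}$, and submultiplicativity of the spectral norm gives $\|\Delta_X\|_2\le\|F_w\|_2\,\|WZ^{\T}\|_2\,\|(ZZ^{\T})^{-1}\|_2\,\|\subscr{Z}{d}\|_2$.

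\emph{Bounding the three random factors.} Next I would note that $W$, whose columns are i.i.d.\ $\mathcal{N}(0,\overbar{Q}_w)$ with $\overbar{Q}_w=I_T\otimes Q_w$, is independent of $Z$, since by (A1) the process noise is independent of the initial state and of the inputs. Then Lemma~\ref{lemma: product of Gaussian matrices}, applied with vector dimensions $nT$ and $n+mT$ (summing to $(n+m)T+n$), yields $\|WZ^{\T}\|_2\le 4\|\overbar{Q}_w\|_2^{1/2}\sqrt{N\big((n+m)T+n\big)\log(9/\delta)}$ with probability at least $1-\delta$ whenever $N\ge N_1$. Since $Z$ has i.i.d.\ standard-normal entries and $N>N_\dd\ge 8(mT+n)+16\log(1/\delta)$, Lemma~\ref{lemma: svd of Gaussian matrix} gives $\subscr{\sigma}{min}(Z)\ge\sqrt{N}/2$, hence $\|(ZZ^{\T})^{-1}\|_2\le 4/N$, with probability at least $1-\delta$; and since $\subscr{Z}{d}$ (the first $N_\dd$ columns of $Z$) also has i.i.d.\ standard-normal entries, Lemma~\ref{lemma: svd of Gaussian matrix} gives $\|\subscr{Z}{d}\|_2=\subscr{\sigma}{max}(\subscr{Z}{d})\le 3\sqrt{N_\dd}/2$ with probability at least $1-\delta$ under the stated condition on $N_\dd$. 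Multiplying the three bounds by $\|F_w\|_2$ collapses the numerical constants to $4\cdot 4\cdot\tfrac{3}{2}=24$ and the $N$-dependence to $\sqrt{N_\dd/N}$ under the radical, which is precisely \eqref{eq: X_c error bound} with $d_1=24\|F_w\|_2\|\overbar{Q}_w\|_2^{1/2}$; a union bound over the three events gives probability at least $1-3\delta$, which is why $\delta\in[0,1/3]$ is required.

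\emph{Main obstacle.} The estimates themselves are routine applications of the two technical lemmas; the genuinely load-bearing steps are the two structural observations that license them: (a) that $W$ and $Z$ are statistically independent, so Lemma~\ref{lemma: product of Gaussian matrices} applies — this rests entirely on the independence clauses in (A1); and (b) that the whitening transformation survives the pseudoinverse, i.e.\ $\overbar{U}^{\T}(\overbar{U}\overbar{U}^{\T})^{-1}\subscr{\overbar{U}}{d}=Z^{\T}(ZZ^{\T})^{-1}\subscr{Z}{d}$, which holds because $\overbar{U}$ and $\subscr{\overbar{U}}{d}$ share the \emph{same} left factor $\Sigma_{\overbar{u}}^{1/2}$. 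Everything else is bookkeeping: tracking that $nT+(n+mT)=(n+m)T+n$ and that the sample-size hypothesis $N>\max\{N_1,N_\dd\}$ exactly meets the requirements of Lemmas~\ref{lemma: product of Gaussian matrices} and~\ref{lemma: svd of Gaussian matrix}.
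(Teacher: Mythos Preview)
Your proof is correct and follows essentially the same approach as the paper: whitening $\overbar{U}$ via $\Sigma_{\overbar{u}}^{1/2}Z$, reducing $\Delta_X$ to $F_wWZ^{\T}(ZZ^{\T})^{-1}\subscr{Z}{d}$, and then bounding the three random factors with Lemma~\ref{lemma: product of Gaussian matrices} and Lemma~\ref{lemma: svd of Gaussian matrix} before combining via a union bound. Your exposition is in fact more explicit than the paper's in tracking the constant $24$, the dimension count $(n+m)T+n$, and the independence of $W$ and $Z$ from (A1).
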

\smallskip
\noindent
\emph{(ii) Sensitivity of map \eqref{eq: LQR trajectories}
  w.r.t. $\Delta_X$.} We focus our analysis on the map
$\left. \map{f}{\mathbb{R}^{n(T+1)N_\dd}\times
    \mathbb{R}^{(n+mT)N_\dd}}{\mathbb{R}^{n+mT}}\right.$
that generates $\subscr{u}{v}$ as in \eqref{eq: LQR input}. Then,
$\subscr{u}{v}^*=f(\Vect{(\subscr{X}{c})},\Vect{(\subscr{\Ubar}{d})})$. Since
$f$ is Fréchet-differentiable with respect to $\Vect{(\subscr{X}{c})}$
\cite{FC-GB-FP:22a,TK-DV:05}, we can write its first-order
Taylor-series~expansion~as
\begin{align}\label{eq: taylor series}
\begin{split}
f(\Vect{(\subscr{\widetilde{X}}{c})},\Vect{(\subscr{\Ubar}{d})})&\!=\!f(\Vect{(\subscr{X}{c})},\Vect{(\subscr{\Ubar}{d})})\\
 + \nabla f_X&\left(\Vect{(\subscr{X}{c})},\Vect{(\subscr{\Ubar}{d})}\right)  \!\Vect{(\Delta_X)},
\end{split}
\end{align}
where $\nabla f_X$ is the Jacobian matrix of
$f(\Vect{(\subscr{X}{c})},\Vect{(\subscr{\Ubar}{d})})$ with respect to
$\Vect{(\subscr{X}{c})}$.  We quantify the sensitivity of the map
\eqref{eq: LQR input} to the change in $\subscr{X}{c}$ by $\nabla f_X$
(large values of $\nabla f_X$ implies higher sensitivity).  Next, we
derive an upper bound on ${\|\nabla f_X\|}_2$, and upper bounds on
${\|\subscr{u}{v}-\subscr{u}{v}^*\|}_2$ and
${\|\subscr{x}{v}-\subscr{x}{v}^*\|}_2$ using the first-order
approximation in \eqref{eq: taylor series}.
\smallskip
\begin{appxlem}{\bf \emph{(Non-asymptotic bound on $\|\nabla f_X\|_2)$}}\label{lemma: jacobian bound}
  Let $\subscr{\overbar{U}}{d}$, $\subscr{X}{c}$, and
  $\nabla
  f_X\left(\Vect{(\subscr{X}{c})},\Vect{(\subscr{\overbar{U}}{d})}\right)$
  be as in \eqref{eq: Xc} and \eqref{eq: taylor series}. Also, let
  $\delta \in [0,1]$ and assume that
  $N_\dd\geq 8(n+mT)+16\log{(1/\delta)}$. Then, with
  probability at least $1-\delta$,
  \begin{align}\label{eq: jacobian bound}
    \left\|\nabla f_X\left(\Vect{(\subscr{X}{c})},\Vect{(\subscr{\overbar{U}}{d})}\right)\right\|_{2} \leq 4 d_2\sqrt{\frac{n(T+1)}{N_\dd}},
  \end{align}
  where $d_2> 0$ is independent of
  $N_\dd$. \oprocend
\end{appxlem}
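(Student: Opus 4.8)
The plan is to regard the map $f$ that produces $\subscr{u}{v}$ in \eqref{eq: LQR input} as the composition of three elementary maps and to bound its Jacobian factor by factor via the chain rule, so that all dependence on $N_\dd$ is carried by a single, data-dependent factor. Concretely, write $f(\cdot\,,\Vect(\subscr{\overbar{U}}{d})) = g\circ h\circ \ell$, where $\ell\colon \subscr{X}{c}\mapsto M := \subscr{X}{c}\subscr{\overbar{U}}{d}^{\dagger}$ is linear, $h\colon M\mapsto P := M^{\T} Q_T M + R_T$ is quadratic, and $g\colon P\mapsto H P^{-1/2}\big(\begin{bmatrix}I_n & 0_{n\times mT}\end{bmatrix}P^{-1/2}\big)^{\dagger} x_0$ collects the remaining algebraic operations; the parameter $\subscr{\overbar{U}}{d}$ enters only as the fixed matrix $\subscr{\overbar{U}}{d}^{\dagger}$. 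By the chain rule, $\nabla f_X = \nabla g(P)\,\nabla h(M)\,\nabla\ell$ evaluated at the nominal argument of \eqref{eq: taylor series}, with $M=\ell(\subscr{X}{c})$ and $P=h(M)$.

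The crucial simplification is that, on the event of Lemma~\ref{lemma: svd of Gaussian matrix} applied to $\subscr{\overbar{U}}{d}$ — which in particular guarantees that $\subscr{\overbar{U}}{d}$ (hence also $\overbar{U}$, of which it is a submatrix) has full row rank — the nominal point satisfies $M = F$ exactly. Indeed $\subscr{X}{c} = \subscr{X}{clean}\overbar{U}^{\dagger}\subscr{\overbar{U}}{d} = F\subscr{\overbar{U}}{d}$, using $\subscr{X}{clean} = F\overbar{U}$ and $\overbar{U}\overbar{U}^{\dagger} = I$, and then $M = F\subscr{\overbar{U}}{d}\subscr{\overbar{U}}{d}^{\dagger} = F$. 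Hence $P = P^\star := F^{\T}Q_T F + R_T$ is deterministic, depending only on the system matrices and cost weights, and $P^\star\succeq R_T\succ 0$ because $R_u\succ 0$. Positive definiteness of $P^\star$ is exactly what makes the output factor $\nabla g(P^\star)$ tame: $P\mapsto P^{-1/2}$ is Fréchet differentiable at $P^\star$, and $\begin{bmatrix}I_n & 0\end{bmatrix}(P^\star)^{-1/2}$ has full row rank $n$, so the pseudo-inverse in $g$ is differentiable at $P^\star$; consequently $\|\nabla g(P^\star)\|_2\le d_g$ for a constant $d_g$ depending only on $P^\star$, $H$, and $\|x_0\|_2$, assembled from the Lipschitz constants of matrix inversion, of the matrix square root (via the Sylvester identity $DS\cdot S + S\cdot DS = \Delta$, which gives $\|DS\|_2\le \|\Delta\|_2/(2\sqrt{\lambda_{\min}(P^\star)})$), and of pseudo-inversion of a full-row-rank matrix. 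Likewise $\nabla h(F)$ acts as $\Delta\mapsto \Delta^{\T}Q_T F + F^{\T}Q_T\Delta$, so $\|\nabla h(F)\|_2\le 2\|Q_T\|_2\|F\|_2$; both bounds are independent of $N_\dd$, though they depend on the horizon $T$.

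The only data-dependent factor is $\nabla\ell$: since $\ell$ is linear, $\|\nabla\ell\|_2 = \|\subscr{\overbar{U}}{d}^{\dagger}\|_2 = 1/\subscr{\sigma}{min}(\subscr{\overbar{U}}{d})$. Writing $\subscr{\overbar{U}}{d} = \Sigma_{\overbar{u}}^{1/2}\subscr{Z}{d}$ as in the proof of Lemma~\ref{lemma: Delta_X bound}, with $\subscr{Z}{d}\in\mathbb{R}^{(n+mT)\times N_\dd}$ having independent $\mathcal{N}(0,1)$ entries, Lemma~\ref{lemma: svd of Gaussian matrix} yields $\subscr{\sigma}{min}(\subscr{Z}{d})\ge\sqrt{N_\dd}/2$ with probability at least $1-\delta$ whenever $N_\dd\ge 8(n+mT)+16\log(1/\delta)$ — exactly the standing hypothesis — hence $\subscr{\sigma}{min}(\subscr{\overbar{U}}{d})\ge\tfrac{1}{2}\subscr{\sigma}{min}(\Sigma_{\overbar{u}}^{1/2})\sqrt{N_\dd}$ and $\|\nabla\ell\|_2\le 2/(\subscr{\sigma}{min}(\Sigma_{\overbar{u}}^{1/2})\sqrt{N_\dd})$. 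Multiplying the three factors, bounding $1\le\sqrt{n(T+1)}$ to cast the result in the stated form, and collecting the $N_\dd$-independent quantities into $d_2 = d_g\|Q_T\|_2\|F\|_2/\subscr{\sigma}{min}(\Sigma_{\overbar{u}}^{1/2})$, gives $\|\nabla f_X\|_2\le 4d_2\sqrt{n(T+1)/N_\dd}$ on this event, i.e., with probability at least $1-\delta$.

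I expect the main obstacle to be the output step — establishing that $g$ is differentiable at $P^\star$ with a Lipschitz constant that can be made explicit and kept independent of $N_\dd$. This requires (a) $P^\star\succ 0$, which is immediate from $R_u\succ 0$, and (b) controlling the derivatives of the matrix square root and of the pseudo-inverse $\big(\begin{bmatrix}I_n&0\end{bmatrix}P^{-1/2}\big)^{\dagger}$ in terms of $\lambda_{\min}(P^\star)$ and $\lambda_{\max}(P^\star)$, where the constant-rank property of the inner matrix (full row rank $n$ in a neighborhood of $P^\star$) is what legitimizes differentiating the pseudo-inverse. This is also the step in which the constant $d_2$ is actually pinned down.
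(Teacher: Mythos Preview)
Your chain-rule decomposition $f=g\circ h\circ\ell$, the key observation that at the nominal point $M=\ell(\subscr{X}{c})=F\subscr{\overbar U}{d}\,\subscr{\overbar U}{d}^{\dagger}=F$ is \emph{deterministic} (so that $\nabla g(P^\star)$ and $\nabla h(F)$ are data-independent and the only $N_\dd$-dependence enters through $\|\nabla\ell\|_2=\|\subscr{\overbar U}{d}^{\dagger}\|_2$), and the use of Lemma~\ref{lemma: svd of Gaussian matrix} on $\subscr{Z}{d}$ to bound $\sigma_{\min}(\subscr{\overbar U}{d})$ under exactly the stated hypothesis on $N_\dd$, are all correct and match the paper's strategy. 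The paper's own proof is not self-contained---it simply invokes \cite[Lemma~IV.4]{FC-GB-FP:22a} together with Lemma~\ref{lemma: svd of Gaussian matrix} and $\|\cdot\|_2\le\|\cdot\|_{\mathrm F}$---so your writeup is in fact more explicit; in particular, the factor $\sqrt{n(T+1)}$ appears in the paper via the Frobenius-norm route, whereas you obtain the sharper $1/\sqrt{N_\dd}$ directly and then weaken it, which is harmless since $d_2$ is only claimed to be independent of $N_\dd$.

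There is, however, one genuine slip in your argument. You assert ``$P^\star\succeq R_T\succ 0$ because $R_u\succ 0$,'' but $R_T=\blkdiag(0_{n\times n},I_T\otimes R_u)$ has a zero block and is only positive \emph{semi}definite, so this does not establish $P^\star\succ 0$. The conclusion is nonetheless true: for $z=[z_0^{\T},z_u^{\T}]^{\T}$ one has $z^{\T}P^\star z\ge z_u^{\T}(I_T\otimes R_u)z_u$, and if $z_u=0$ then $z^{\T}P^\star z=(Oz_0)^{\T}Q_T(Oz_0)=\sum_{t=0}^{T}z_0^{\T}(A^t)^{\T}Q_xA^t\,z_0$, which is strictly positive for $z_0\ne 0$ by the observability of $(A,Q_x^{1/2})$ in Assumption~(A2) once $T\ge n-1$. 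Since you correctly identify $P^\star\succ 0$ as the linchpin for differentiability of $P\mapsto P^{-1/2}$ and of the pseudo-inverse in $g$, this step needs the corrected justification.
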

\begin{proof}
  The proof can be adapted from the proof of \cite[Lemma
  IV.4]{FC-GB-FP:22a}, then using Lemma \ref{lemma: svd of Gaussian
    matrix} and $\|\cdot\|_2 \leq \|\cdot\|_{\mathrm{F}}$.
\end{proof}
\smallskip
\begin{appxthrm}{\bf \emph{(Non-asymptotic bound on the deviation of the LQR trajectories)}}\label{thrm: bound on LQR traj}
  Let $\subscr{u}{v}$ and $\subscr{x}{v}$ be as in \eqref{eq: LQR
    trajectories} and $\subscr{u}{v}^*$ and $\subscr{x}{v}^*$ be the
  optimal LQR trajectories of length $T$ of \eqref{eq: system} from
  the initial state $x_0$. Let $\delta \in [0,1/6]$ and assume that
  $N\geq \max{\left\{N_1,N_2,N_3\right\}}$, with
  $N_1=2\left((n+m)T+n\right)\log{\left(1/\delta\right)}$,
  $N_2=8(mT+n) +16 \log{\left(1/\delta\right)}$, and
  $N_3=((n+m)T+n) \log{\left(9/\delta\right)}$.
%
%\begin{align*}
%N_1&=2\left((n+m)T+n\right)\log{\left(2/\delta\right)},\\
%N_2&=8(mT+n) +16 \log{\left(2/\delta\right)}.
%\end{align*}
Then, with probability at least $1-4\delta$,
\begin{align}\label{eq: LQR input bound}
\begin{split}
\|\subscr{u}{v}-\subscr{u}{v}^*\|_2 &\leq d_3 \sqrt{\frac{\left((n+m)T+n\right)\log{(9/\delta)}}{N}}.
\end{split}
\end{align}
Further, with probability at least $1-6\delta$,
\begin{align}\label{eq: LQR trajectory bound}
\begin{split}
 \|\subscr{x}{v}-\subscr{x}{v}^*\|_2 \leq& d_4\sqrt{\frac{\left((n+m)T+n\right)\log{(9/\delta)}}{N}},
 \end{split}
\end{align}
with
\begin{align*}
d_3&=4d_1 d_2\sqrt{qn(T+1)},\\
d_4 &= {\|F\|}_2 d_3 +16{\|F_w\|}_2 {\|\Sigma_{\overbar{u}}^{-1/2}\|}_2 {\|\overbar{Q}_w\|}_2^{1/2}\left({\|\subscr{\overbar{u}}{v}^*\|}_2+d_3\right),
\end{align*}
where $d_1$, $F$, and $F_w$ are as in \eqref{eq: X_c error bound}
and~\eqref{eq: data evolution2},~respectively, $d_2>0$ is
independent of $N$, $q=\Rank{(\Delta_X)} \leq n(T+1)$,
$\subscr{\overbar{u}}{v}^*= [x_0^{\T}, {\subscr{u}{v}^*}^{\T}]^{\T}$,
and $\overbar{Q}_w$ and $\Sigma_{\overline{u}}$ are as in Lemma
\ref{lemma: Delta_X bound}. \oprocend
%\smallskip
\begin{proof}
Inequality \eqref{eq: LQR input bound} follows from \eqref{eq: taylor series} by using Lemma \ref{lemma: Delta_X bound}, Lemma \ref{lemma: jacobian bound}, and ${\|\Vect{(\Delta_X)}\|}_2={\|\Delta_X\|}_{\mathrm{F}}\leq\sqrt{q}{\|\Delta_X\|}_{2}$, with $q=\Rank{(\Delta_X)}$. Next, we derive \eqref{eq: LQR trajectory bound}. For notational convenience, we use $\Delta_u$ and $\Delta_x$ to denote $ \subscr{u}{v}-\subscr{u}{v}^*$ and $ \subscr{x}{v}-\subscr{x}{v}^*$, respectively. From \eqref{eq: LQR trajectories}, we can write
\begin{align}\label{eq: Delta_x proof}
 {\|\Delta_x\|}_2&=\left\|\subscr{\widetilde{X}}{c}\subscr{\overbar{U}}{d}^{\dagger} 
\begin{bmatrix}
 x_0\\ \subscr{u}{v}
\end{bmatrix}
-\subscr{X}{c}\subscr{\overbar{U}}{d}^{\dagger} 
\begin{bmatrix}
 x_0\\ \subscr{u}{v}^*
\end{bmatrix}\right\|_2\\
&=\left\|\subscr{X}{c}\subscr{\overbar{U}}{d}^{\dagger}
\begin{bmatrix}
 0 \\ \Delta_u
\end{bmatrix}
+\Delta_X \subscr{\overbar{U}}{d}^{\dagger}
\begin{bmatrix}
 x_0 \\ \subscr{u}{v}^*
\end{bmatrix}
+\Delta_X \subscr{\overbar{U}}{d}^{\dagger}
\begin{bmatrix}
 0 \\ \Delta_u
\end{bmatrix}\right\|_2 \nonumber\\
\leq
\| \subscr{X}{c}&\subscr{\overbar{U}}{d}^{\dagger}\|_2 \|\Delta_u\|_2
+\|\Delta_X \subscr{\overbar{U}}{d}^{\dagger}\|_2 \|\subscr{\overbar{u}}{v}^*\|_2
+\|\Delta_X \subscr{\overbar{U}}{d}^{\dagger}\|_2 \|\Delta_u\|_2.\nonumber
\end{align}
Note that $\subscr{\overbar{U}}{d} \subscr{\overbar{U}}{d}^{\dagger}=I_{n+mT}$. Then we have 
\begin{align*}
{ \|\subscr{X}{c}\subscr{\overbar{U}}{d}^{\dagger}\|}_2&=\|\subscr{X}{clean}\Ubar^{\dagger}\|_2=\|F\|_2,\\
{ \|\Delta_X\subscr{\Ubar}{d}^{\dagger}\|}_2&\!=\!\!\|(X\!-\!\subscr{X}{clean})\Ubar^{\dagger}\|_2 =\|F_w W \Ubar^{\dagger}\|_2\\
&\!\leq\!\|F_w\|_2\|W \Ubar^{\T}\|_2\|(\Ubar \Ubar^{\T})^{-1}\|_2,
\end{align*}
Inequality \eqref{eq: LQR trajectory bound} follows from \eqref{eq: Delta_x proof} by using \eqref{eq: LQR input bound}, Lemma \ref{lemma: product of Gaussian matrices}, and Lemma \ref{lemma: svd of Gaussian matrix} to bound $\|\Delta_u\|_2$, $\|W \Ubar^{\T}\|_2$, and $\|(\Ubar \Ubar^{\T})^{-1}\|_2$, respectively, and noting that for $N\geq N_3$ we have $\frac{\delta'}{N}\leq \sqrt{\frac{\delta'}{N}}$, with $\delta'=\left((n+m)T+n\right)\log{(9/\delta)}$.
%
%\begin{align*}
%\frac{\left((n+m)T+n\right)\log{(18/\delta)}}{N} \leq \sqrt{\frac{\left((n+m)T+n\right)\log{(18/\delta)}}{N}}.
%\end{align*}
The probabilities follow from the union bound.
\end{proof}
\end{appxthrm}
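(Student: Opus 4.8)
\emph{Proposal.} The plan is to treat the two displayed inequalities in sequence, since the state-trajectory bound \eqref{eq: LQR trajectory bound} will be built on top of the input bound \eqref{eq: LQR input bound}. Throughout, the deviation of $(\subscr{u}{v},\subscr{x}{v})$ from the optimal pair $(\subscr{u}{v}^*,\subscr{x}{v}^*)$ is attributed entirely to the process-noise perturbation $\Delta_X$ of \eqref{eq: Xc}, whose size is already controlled by Lemma \ref{lemma: Delta_X bound}. The two preliminary lemmas (Lemma \ref{lemma: Delta_X bound} and Lemma \ref{lemma: jacobian bound}) do the heavy lifting; the theorem itself is an assembly of these estimates through the first-order expansion \eqref{eq: taylor series} and a union bound.

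First I would establish \eqref{eq: LQR input bound}. Since $\subscr{u}{v}^*=f(\Vect{(\subscr{X}{c})},\Vect{(\subscr{\overbar{U}}{d})})$ and $\subscr{u}{v}=f(\Vect{(\subscr{\widetilde{X}}{c})},\Vect{(\subscr{\overbar{U}}{d})})$, the expansion \eqref{eq: taylor series} gives $\subscr{u}{v}-\subscr{u}{v}^*=\nabla f_X\,\Vect{(\Delta_X)}$, so that $\|\subscr{u}{v}-\subscr{u}{v}^*\|_2\le\|\nabla f_X\|_2\,\|\Vect{(\Delta_X)}\|_2$. I would then rewrite $\|\Vect{(\Delta_X)}\|_2=\|\Delta_X\|_{\mathrm{F}}\le\sqrt{q}\,\|\Delta_X\|_2$ with $q=\Rank{(\Delta_X)}$, and substitute the bounds of Lemma \ref{lemma: jacobian bound} and Lemma \ref{lemma: Delta_X bound}. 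The key simplification is that the factor $\sqrt{N_\dd}$ appearing in the numerator of the $\Delta_X$ bound cancels exactly against the $1/\sqrt{N_\dd}$ in the Jacobian bound, leaving a quantity independent of $N_\dd$ and proportional to $\sqrt{((n+m)T+n)\log(9/\delta)/N}$, with constant $d_3=4d_1d_2\sqrt{qn(T+1)}$. The probability $1-4\delta$ follows from combining the $1-3\delta$ event of Lemma \ref{lemma: Delta_X bound} with the $1-\delta$ event of Lemma \ref{lemma: jacobian bound} by a union bound.

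Next I would derive \eqref{eq: LQR trajectory bound} by expanding $\Delta_x:=\subscr{x}{v}-\subscr{x}{v}^*$ as in \eqref{eq: Delta_x proof}: substituting $\subscr{\widetilde{X}}{c}=\subscr{X}{c}+\Delta_X$ and $\subscr{u}{v}=\subscr{u}{v}^*+\Delta_u$, where $\Delta_u:=\subscr{u}{v}-\subscr{u}{v}^*$, splits $\Delta_x$ into a term linear in $\Delta_u$, a term linear in $\Delta_X$, and a bilinear term. The two prefactors are then identified in closed form: using $\subscr{\overbar{U}}{d}\subscr{\overbar{U}}{d}^{\dagger}=I_{n+mT}$ together with $\subscr{X}{c}=\subscr{X}{clean}\overbar{U}^{\dagger}\subscr{\overbar{U}}{d}$ and $\subscr{X}{clean}=F\overbar{U}$ gives $\|\subscr{X}{c}\subscr{\overbar{U}}{d}^{\dagger}\|_2=\|F\|_2$, while $\Delta_X\subscr{\overbar{U}}{d}^{\dagger}=(X-\subscr{X}{clean})\overbar{U}^{\dagger}=F_wW\overbar{U}^{\dagger}$ by \eqref{eq: data evolution2}, so that $\|\Delta_X\subscr{\overbar{U}}{d}^{\dagger}\|_2\le\|F_w\|_2\|W\overbar{U}^{\T}\|_2\|(\overbar{U}\overbar{U}^{\T})^{-1}\|_2$. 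Here I would invoke Lemma \ref{lemma: product of Gaussian matrices} on the independent Gaussian factors $W$ (columns $\sim\mathcal{N}(0,\overbar{Q}_w)$) and $\overbar{U}$ (columns $\sim\mathcal{N}(0,\Sigma_{\overbar{u}})$) to bound $\|W\overbar{U}^{\T}\|_2$, and Lemma \ref{lemma: svd of Gaussian matrix} (through $\overbar{U}=\Sigma_{\overbar{u}}^{1/2}Z$, whence $\|(\overbar{U}\overbar{U}^{\T})^{-1}\|_2\le 4\|\Sigma_{\overbar{u}}^{-1/2}\|_2^2/N$) to obtain $\|\Delta_X\subscr{\overbar{U}}{d}^{\dagger}\|_2=O\!\left(\sqrt{((n+m)T+n)\log(9/\delta)/N}\right)$. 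Combining these with the input bound \eqref{eq: LQR input bound} in the three-term estimate and collecting constants yields $d_4$ as stated; the two new Gaussian events add $2\delta$ to the failure probability of the input bound, for a total of $1-6\delta$.

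The main obstacle I anticipate is the bilinear term $\Delta_X\subscr{\overbar{U}}{d}^{\dagger}[0;\,\Delta_u]$, whose norm is a product of two quantities each of order $1/\sqrt{N}$ and thus nominally smaller than the leading terms, but which must be shown to be dominated without introducing an extra $1/N$ factor that would spoil the clean $\sqrt{\delta'/N}$ form, where $\delta'=((n+m)T+n)\log(9/\delta)$. This is handled by the elementary observation (valid once $N\ge N_3$) that $\delta'/N\le\sqrt{\delta'/N}$, i.e.\ that a product of two $\sqrt{\delta'/N}$ factors is bounded by a single one; care is needed to route exactly the right power into the constant $d_4$. A secondary subtlety is that the entire argument rests on the first-order expansion \eqref{eq: taylor series} being exact up to the displayed Jacobian term, which presupposes Fréchet-differentiability of the pseudoinverse/square-root map $f$ and that the higher-order remainder is negligible at the scale $\|\Delta_X\|_2=O(1/\sqrt{N})$; I would lean on the cited differentiability results to justify using the linearization directly, and finally verify that the thresholds $N_1,N_2,N_3$ simultaneously satisfy the hypotheses of all four lemmas invoked.
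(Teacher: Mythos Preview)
Your proposal is correct and follows essentially the same route as the paper's proof: the input bound via the first-order expansion \eqref{eq: taylor series} combined with Lemmas \ref{lemma: Delta_X bound} and \ref{lemma: jacobian bound} (with the $\sqrt{N_\dd}$ cancellation and the $\|\Vect(\Delta_X)\|_2\le\sqrt{q}\,\|\Delta_X\|_2$ step), and the state bound via the three-term decomposition of $\Delta_x$, the identifications $\subscr{X}{c}\subscr{\overbar{U}}{d}^{\dagger}=F$ and $\Delta_X\subscr{\overbar{U}}{d}^{\dagger}=F_wW\overbar{U}^{\dagger}$, Lemmas \ref{lemma: product of Gaussian matrices}--\ref{lemma: svd of Gaussian matrix}, and the $\delta'/N\le\sqrt{\delta'/N}$ absorption of the bilinear term. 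Your caveat about the higher-order Taylor remainder is well taken; the paper writes \eqref{eq: taylor series} as an equality and does not explicitly control the remainder either, so you are matching the paper's level of rigor there.
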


\smallskip \emph{(iii) Error between $\subscr{K}{LQR}$ and
  $K_\text{LQR}^\text{D}$.} We are now ready to conclude the proof of
Theorem \ref{thm: LQR gain}. Notice that
\begin{align}
  \subscr{u}{m}^* = \subscr{K}{LQR} 
  \subscr{x}{m}^*, \text{ and }
  \subscr{u}{m}^* + \delta_u = \subscr{K}{LQR}^\text{D} \left(
  \subscr{x}{m}^* + \delta_x \right),
\end{align}
where $\delta_u=\subscr{u}{m}-\subscr{u}{m}^*$ and
$\delta_x=\subscr{x}{m}-\subscr{x}{m}^*$. Note that $\subscr{u}{m}$
and $\subscr{x}{m}$ are the matrices obtained by reorganizing the
inputs and states in the vectors $\subscr{u}{v}$ and $\subscr{x}{v}$
in chronological order. For notational convenience, we use $K$ and
$K^{\text{D}}$ to denote $\subscr{K}{LQR}$ and
$ \subscr{K}{LQR}^\text{D}$. Let $\Delta_K=K-K^{\text{D}}$. In what
follows, subscript $i$ denotes the $i$-th row, with
$i\in \{1,\cdots , m\}$. Using~\cite[Theorem 5.1]{PAW:73} and assuming that $\subscr{x}{m}^*$ is of full row rank,\footnote{This condition is typically satisfied for generic choices of the initial~state.}
\begin{align}\label{eq: error K_LQR proof}
{\|\Delta_{K,i}\|}_2\leq d_5\left(\epsilon {\|K_i\|}_2 {\|\subscr{x}{m}^*\|}_2 + {\|\delta_{u,i}\|}_2  + \epsilon \alpha{\|r\|}_2 \right),
\end{align}
where 
\begin{align*}
 d_5=\frac{\alpha}{1-\alpha \epsilon {\|\subscr{x}{m}^*\|}_2}, \quad \epsilon = \frac{{\|\delta_x\|}_2}{{\|\subscr{x}{m}^*\|}_2}, \quad r= \subscr{u}{m,$i$}^*  -K_i \subscr{x}{m}^*,
 \end{align*}
 and $\alpha=\|\subscr{x}{m}^*\|_2\|{\subscr{x}{m}^*}^{\dagger}\|_2$ is the spectral condition number of $\subscr{x}{m}^*$. From \cite[Theorem 3.2]{FC-GB-FP:22}, we have $\|r\|_2\!\leq\! d_6\rho^T$, where $d_6\!>\!0$ and $\rho\!<\!1$, which are independent of $N$. Since ${\|\subscr{x}{m}^*\|}_2\!=\!\subscr{\sigma}{max}(\subscr{x}{m}^*)$, ${\|(\subscr{x}{m}^*)^{\dagger}\|}_2\!=\!1/\subscr{\sigma}{min}(\subscr{x}{m}^*)$. Then, we can write $d_5$~as 
 \begin{align*}
 d_5=\frac{1}{\subscr{\sigma}{min}(\subscr{x}{m}^*)\left(1-  \kappa (\subscr{x}{m}^*) \right)}, \quad\text{with} \quad \kappa (\subscr{x}{m}^*)=\frac{\subscr{\sigma}{max}(\delta_x)}{\subscr{\sigma}{min}(\subscr{x}{m}^*)},
\end{align*}
For sufficiently large $N$ such that  $\subscr{\sigma}{max}(\Delta X)<\subscr{\sigma}{min}(X^*)$, we have $\kappa (\subscr{x}{m}^*)<1$ and $\epsilon \alpha <1$. Then, we can write \eqref{eq: error K_LQR proof} as
 \begin{align*}
{\|\Delta_{K,i}\|}_2&\leq d_5\Big({\|K_i\|}_2 {\|\delta_x\|}_2 + {\|\delta_{u,i}\|}_2  + d_6\rho^T\Big),\\
&\overset{\text{(a)}}{\leq} d_5 \Big({\|K\|}_2 {\|\Delta_x\|}_2 + {\|\Delta_u\|}_2  + d_6\rho^T\Big),
\end{align*}
where in step (a), we have used $\|\delta_x\|_2 \leq \|\delta_x\|_{\mathrm{F}}=\|\Vect{(\delta_x})\|_{2}=\|\Delta_x\|_{2}$, and $\|\delta_{u,i}\|_2=\|\delta_{u,i}\|_{\mathrm{F}}\leq \|\delta_{u}\|_{\mathrm{F}}=\|\Vect{(\delta_u)}\|_{2}=\|\Delta_u\|_{2}$, where $\Delta_x$ and $\Delta_u$ are as in Theorem \ref{thrm: bound on LQR traj}. Noting that $\|\Delta K\|_{\mathrm{F}}=\sqrt{\Tr{\Delta K (\Delta K)^{\T}}}=\sqrt{\sum_{i=1}^m \|\Delta K_i\|_2^2}$ and using the bounds in Theorem \ref{thrm: bound on LQR traj}, we have with probability at least $1-6\delta$
  \begin{align*}
    {\| \Delta_K \|}_2 \leq \frac{1}{\subscr{\sigma}{min}(\subscr{x}{m}^*)\left(1-\kappa(\subscr{x}{m}^*)\right)} \left(\frac{c_1}{\sqrt{N}} + c_2 \rho^T \right), 
  \end{align*}
where,
\begin{align}\label{eq: c1 and c2}
\begin{split}
 c_1&\!=\!\left(d_3\!+\!\|\subscr{K}{LQR}\|_2 d_4\right)\! \sqrt{m\left((n\!+\!m)T\!+\!n\right)\log{(9/\delta)}},\\
 c_2&=d_6\sqrt{m},
\end{split}
\end{align}
and $d_3$ and $d_4$ are as in Theorem \ref{thrm: bound on LQR traj}. Finally, the probability follows from the union bound. This concludes the proof.
\renewcommand{\thesubsection}{C}
\subsection{Proof of Theorem \ref{thm: KF}} \label{app: KF}
The Kalman filter computes the estimate $x_{\kf}(t)$ given
$\{u(0),\dots, u(t-1), y(0),\dots, y(t)\}$ that minimizes the cost
\begin{align}\label{eq: MMSE}
  %\ell\triangleq
   \sum_{t=0}^T
  \expect{(x(t)-x_{\kf}(t))^{T}(x(t) - x_{\kf}(t))} ,
\end{align}
which is then used to generate LQG inputs. Equivalently,
$x_{\kf}(t)$ can be obtained with the following linear estimator,
\begin{align*}
x_{\kf}(t)\!\!=\!\!&
\underbrace{\begin{bmatrix}
L^u_{t,0}  &\!\!\!\! \cdots\!\!\!\!&\!\! L^u_{t,t-1}
\end{bmatrix}}_{L^u_t}
\!\underbrace{\begin{bmatrix}
 u(0)\\
 \vdots\\
 u(t-1)
\end{bmatrix}}_{u_0^{t-1}}
\!+\!
\underbrace{\begin{bmatrix}
L^y_{t,0}  & \!\!\!\!\cdots\!\!\!\! &\!\! L^y_{t,t}
\end{bmatrix}}_{L^y_t}
\!\underbrace{\begin{bmatrix}
 y(0)\\
 \vdots\\
 y(t)
\end{bmatrix}}_{y_0^{t}}\!,\\
=&
\underbrace{\begin{bmatrix}
 L^u_t & L^y_t
\end{bmatrix}}_{L_t^{\text{KF}}}
\underbrace{\begin{bmatrix}
 u_0^{t-1}\\y_0^{t}
\end{bmatrix}}_{z_t},
\end{align*}
where $L_t^{\text{KF}} \in \mathbb{R}^{n\times mt+p(t+1)}$, with
$L^u_t \in \mathbb{R}^{n\times mt}$ and
$L^y_t \in \mathbb{R}^{n\times p(t+1)}$, is the estimator gain that
minimizes \eqref{eq: MMSE}. Let $e(t) = x(t)- x_{\kf}(t)$ and
$\Sigma_{e,t} \in \mathbb{R}^{n\times n}\succeq 0$ denote the
estimation error and the estimation error covariance matrix,
respectively. For an optimal linear estimator, $L_t^{\text{KF}}$, we have
$e(t)\sim \mathcal{N}(0,\Sigma_{e,t})$, and we can write the state
$x(t)$ as
\begin{align*}
 x(t)=L_t^{\text{KF}} z_t + e(t).
\end{align*}
Let 
\begin{align}\label{eq: state and estimation error t}
  x_t = [x^1(t),\dots ,x^N(0)], \quad e_t =  [e^1(t),\dots , e^N(0)],
\end{align}
where $x^i(t)$ and $e^i(t)$ denote the state and the state estimation
error incurred by $L_t^{\text{KF}}$ at time $t$ for the $i$-th trajectory of the
data \eqref{eq: data}, respectively. Further, let
$Z_t = [U_{t-1}^{\T}, Y_t^{\T}]^{\T}$, where $U_t$ and $Y_t$ are the
submatrices of $U$ and $Y$ in \eqref{eq: data} obtained by selecting
the inputs and outputs up to some $t$. Then,
\begin{align}\label{eq: estimator data t}
x_t=L_t^{\text{KF}} Z_t + e_t.
\end{align}
To estimate the optimal filter $L_t$ from the data \eqref{eq: data},
we consider the following least squares problem
\begin{align}\label{eq: least squares Lt}
 L_t^{\text{D}}=\argmin{L_t}{\|x_t-L_t Z_t\|_{\mathrm{F}}^2}.
\end{align}
Problem \eqref{eq: least squares Lt} admits a unique solution since $Z_t$ is full-row rank, which is given by \eqref{eq: DD KF}. Next, we bound $\|L_t^{\text{D}}-L_t^{\text{KF}}\|_2$.%The following result provides an upper bound to $\|L_t^{\text{D}}-L_t^{\text{KF}}\|_2$.%{\aammargin{The solution is unique since $Z_t$ is full-row rank. Should we clarify or is it obvious?}}
\smallskip
\begin{appxthrm}{\bf \emph{(Non-asymptotic bound on
      $\|L_t^{\D}-L_t^{\text{KF}}\|_2$)}}\label{lemma: Lt_hat sample
    complexity}
  Let $L_t^{\text{KF}}$ and $L_t^{\text{D}}$ be as in \eqref{eq:
    estimator data t} and \eqref{eq: DD KF}, respectively, and let
  $\delta \in [0,1/2]$. Assume that
  $N\geq \max{\left\{N_1,N_2\right\}}$, with
  $N_1=2\left((n+m)T+n\right)\log{\left(1/\delta\right)}$ and
  $N_2=8(mT+n) +16 \log{\left(1/\delta\right)}$.
  %
%  \begin{align*}
%    N_1&=2\left((n+m)T+n\right)\log{\left(2/\delta\right)},\\
%    N_2&=8(mT+n) +16 \log{\left(2/\delta\right)}.
%  \end{align*}
  Then, with probability at least $1-2\delta$,
  \begin{align}\label{eq: F_hat error bound}
    {\|L_t^{\text{D}}-L_t^{\text{KF}}\|}_2 \leq d_7\sqrt{\frac{\left((m+p)t+n+p\right)\log{\left(9/\delta\right)}}{N}},
  \end{align}
  with 
  \begin{align*}
    d_7\triangleq 16 {\|\Sigma_Z^{-1/2}\|}_2 {\|\Sigma_{e,t}\|}_2^{1/2} , 
  \end{align*}
  where $\Sigma_Z \in \mathbb{R}^{(m+p)t+p\times (m+p)t+p}\succ 0$
  comprises the noise statistics, $\Sigma_0$, and $\Sigma_u$ in Assumption
  \ref{assump: data input}, and $\Sigma_{e,t}$ is the optimal
  estimation error covariance matrix at time $t$. \oprocend
\end{appxthrm}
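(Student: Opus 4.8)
The plan is to reduce the error $L_t^{\D}-L_t^{\text{KF}}$ to a product of Gaussian random matrices and then invoke Lemmas~\ref{lemma: product of Gaussian matrices} and \ref{lemma: svd of Gaussian matrix}. First I would use the linear data model \eqref{eq: estimator data t}: since $Z_t$ is full row rank we have $Z_t Z_t^{\dagger}=I$, so substituting into \eqref{eq: DD KF} gives $L_t^{\D}=x_t Z_t^{\dagger}=(L_t^{\text{KF}}Z_t+e_t)Z_t^{\dagger}=L_t^{\text{KF}}+e_t Z_t^{\dagger}$, hence $L_t^{\D}-L_t^{\text{KF}}=e_t Z_t^{\dagger}$. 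The entire error is therefore controlled by how the estimation-error matrix $e_t$ projects onto the row space of the data matrix $Z_t$.

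Next I would whiten the data. Because the experimental inputs are i.i.d.\ Gaussian (Assumption~\ref{assump: data input}), the initial state is Gaussian, and each output is an affine function of inputs, initial state, and measurement noise, every column of $Z_t$ is a zero-mean Gaussian vector with a common covariance $\Sigma_Z\in\real^{((m+p)t+p)\times((m+p)t+p)}$; under Assumptions (A1)--(A2) this covariance is positive definite, so I can write $Z_t=\Sigma_Z^{1/2}G$ with $G$ having i.i.d.\ $\mathcal{N}(0,I)$ columns. Then $e_t Z_t^{\dagger}=e_t G^{\T}(GG^{\T})^{-1}\Sigma_Z^{-1/2}$, and submultiplicativity of the spectral norm yields $\|L_t^{\D}-L_t^{\text{KF}}\|_2\le\|e_t G^{\T}\|_2\,\|(GG^{\T})^{-1}\|_2\,\|\Sigma_Z^{-1/2}\|_2$.

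The key structural fact — and the step I expect to require the most care — is that $e_t$ and $Z_t$ (equivalently $G$) are \emph{independent}. This is the orthogonality principle of the optimal estimator: under joint Gaussianity the MMSE estimator is linear, the error $e(t)=x(t)-x_{\kf}(t)$ is uncorrelated with the measurement vector $z_t$ used to form $x_{\kf}(t)$, and joint Gaussianity upgrades uncorrelatedness to independence; since distinct experimental trajectories are independent, the $i$-th columns of $e_t$ and of $G$ form an independent pair of Gaussian vectors with laws $\mathcal{N}(0,\Sigma_{e,t})$ and $\mathcal{N}(0,I)$. With this in hand, Lemma~\ref{lemma: product of Gaussian matrices}, applied with the roles $n\mapsto n$ and $m\mapsto(m+p)t+p$, gives for $N\ge N_1$ that $\|e_t G^{\T}\|_2\le 4\|\Sigma_{e,t}\|_2^{1/2}\sqrt{N((m+p)t+p+n)\log(9/\delta)}$ with probability at least $1-\delta$, while Lemma~\ref{lemma: svd of Gaussian matrix} gives $\|(GG^{\T})^{-1}\|_2=\sigma_{\min}(G)^{-2}\le 4/N$ for $N\ge N_2$ with probability at least $1-\delta$.

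Finally, I would multiply the two bounds and collect constants, which produces exactly $16\|\Sigma_Z^{-1/2}\|_2\|\Sigma_{e,t}\|_2^{1/2}\sqrt{((m+p)t+n+p)\log(9/\delta)/N}=d_7\sqrt{((m+p)t+n+p)\log(9/\delta)/N}$, and a union bound over the two events yields the claim with probability at least $1-2\delta$; the hypotheses $N\ge\max\{N_1,N_2\}$ are chosen precisely so that both lemmas apply (uniformly in $t\le T$). Beyond this bookkeeping, the two points needing genuine justification are (a) $\Sigma_Z\succ0$, which follows from the controllability/observability part of Assumptions (A1)--(A2) together with $\Sigma_u,\Sigma_0\succ0$, so that the whitening $Z_t=\Sigma_Z^{1/2}G$ is legitimate, and (b) the independence of $e_t$ and $Z_t$, which is exactly what makes Lemma~\ref{lemma: product of Gaussian matrices} applicable.
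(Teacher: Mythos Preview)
Your proposal is correct and follows essentially the same route as the paper: write $L_t^{\D}-L_t^{\text{KF}}=e_t Z_t^{\dagger}$, whiten $Z_t=\Sigma_Z^{1/2}G$, bound $\|e_t G^{\T}\|_2$ via Lemma~\ref{lemma: product of Gaussian matrices} and $\|(GG^{\T})^{-1}\|_2$ via Lemma~\ref{lemma: svd of Gaussian matrix}, and finish with a union bound. If anything, you are more explicit than the paper on the two structural points it takes for granted, namely $\Sigma_Z\succ 0$ and the independence of $e_t$ and $Z_t$ via the orthogonality principle for the Gaussian MMSE estimator.
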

\begin{proof}
  Let $Z=\Sigma_{Z}^{1/2}G$ and where $\Sigma_{Z}\succ 0$ is as in the
  theorem statement, and $G \in \mathbb{R}^{(m+p)t+p \times N}$ is a
  random matrix whose columns are independent random vectors
  distributed as $\mathcal{N}\!\sim\!(0,I_{(m+p)t+p})$. From
  \eqref{eq: DD KF} and \eqref{eq: estimator data t},
\begin{align*}
{\|L_t^{\D} -L_t^{\text{KF}}\|}_2&={\|e_t Z^{\dagger}\|}_2={\|e_t G^{\T} (G G^{\T})^{-1} \Sigma_Z^{-1/2}\|}_2\\
&\leq {\|\Sigma_Z^{-1/2}\|}_2 {\|e_t G^{\T} \|}_2 {\|(G G^{\T})^{-1}\|}_2.
\end{align*}
The proof follows by using Lemma \ref{lemma: product of Gaussian
  matrices} to bound ${\|e_t G^{\T}\|}_2$, and Lemma \ref{lemma: svd
  of Gaussian matrix} to bound ${\|(GG^{\T})^{-1}\|}_2$. Finally, The
probability follows from the union bound.
% Follows similarly as the proof of Lemma \ref{lemma: F_hat sample
%    complexity}.\margin{Do we still have this proof or do we need to
%    add it?}
\end{proof}
To conclude the proof of Theorem \ref{thm: KF}, we have
\begin{align*}
   {\left\|\subscr{x}{KF}(t)- L_t^{\D} 
        \begin{bmatrix*}
      u_0^{t-1} \\ y_0^t
    \end{bmatrix*}\right\|}_2
    &={\left\|L_t^{\text{KF}}
    \begin{bmatrix*}
      u_0^{t-1} \\ y_0^t
    \end{bmatrix*}- L_t^{\D} 
        \begin{bmatrix*}
      u_0^{t-1} \\ y_0^t
    \end{bmatrix*}\right\|}_2\\
    &\leq {\|L_t^{\text{KF}}-L_t^{\D}\|}_2 {\left\|
    \begin{bmatrix*}
      u_0^{t-1} \\ y_0^t
    \end{bmatrix*}\right\|}_2,
  \end{align*}
  where $u_0^{t}$ and $y_0^t$ are the vectors of inputs and outputs of
  \eqref{eq: system}, respectively, from time $0$ up to time
  $t$. Using Theorem \ref{lemma: Lt_hat sample complexity},
\begin{align}\label{eq: state estimate bound}
   {\left\|\subscr{x}{KF}(t)- L_t^{\D} 
        \begin{bmatrix*}
      u_0^{t-1} \\ y_0^t
    \end{bmatrix*}\right\|}_2
    \leq \frac{c_3}{\sqrt{N}}{\left\|
    \begin{bmatrix*}
      u_0^{t-1} \\ y_0^t
    \end{bmatrix*}\right\|}_2,
  \end{align}
where $c_3=d_7\sqrt{\left((m+p)t+n+p\right)\log{\left(9/\delta\right)}}$, and $d_7$ is as~in Theorem \ref{lemma: Lt_hat sample complexity}. The above inequality holds with probability at least $1-2\delta$, which follows from Theorem \ref{lemma: Lt_hat sample complexity} for $\delta~\in~[0,1/2]$. This concludes the proof of Theorem \ref{thm: KF}.

\renewcommand{\thesubsection}{D}
\subsection{Proof of Theorem \ref{thm: LQG}}\label{app: LQG}
%We begin by introducing the following closed loop data,
%\begin{align}\label{eq: cl data}
%  \subscr{U}{cl} =\!
%  \begin{bmatrix}
%    u_{\cl}^{1} \!\!\!&\!\! \cdots \!\!&\!\!\! u_{\cl}^{M}
%  \end{bmatrix}, \qquad
%                                                                    \subscr{Y}{cl} =\!
%                                                                    \begin{bmatrix}
%                                                                      y_{\cl}^{1} \!\!\!&\!\! \cdots \!\!&\!\!\! y_{\cl}^{M}
%                                                                    \end{bmatrix}
%                                                                                                     ,
%\end{align}
%where $x_{\cl}^i$ and $y_{\cl}^i$ are the $i$-th state and output trajectories of
%\eqref{eq: system} generated by the input $u_\cl^i$. That is, for
%$i \in \until{M}$,
%\begin{align*}
%  u_\cl^{i}& =\!
%         \begin{bmatrix}
%           u_\cl^{i}(0) \\ \vdots \\ u_\cl^{i}(T-1)
%         \end{bmatrix} \qquad
%  ,
%    y_\cl^{i} =\!
%         \begin{bmatrix}
%           y_\cl^{i}(0) \\ \vdots \\ y_\cl^{i}(T-1)
%         \end{bmatrix}
%  ,
%\end{align*}
%
%where $u_\cl^i$ is generated using the controller in \eqref{eq: LQG inputs}, and $M\geq n+nm+np$.
Consider the closed-loop trajectories in \eqref{eq: cl data}, and let
$U^n_{\dlqg}$ and $Y_{\dlqg}^n$ be the submatrices of $U_\dlqg$ and
$Y_\dlqg$ in \eqref{eq: cl data} obtained by selecting only the inputs
from time $T-n$ up to time $T-1$ and the outputs from time $T-n+1$
up~to~time~$T$, respectively. We can write the data-based and the
model-based LQG inputs at time $T$ for the trajectories in \eqref{eq:
  cl data} as
\begin{align*}
\underbrace{\begin{bmatrix}
 u^1_{\dlqg}(T) & \cdots &  u^M_{\dlqg}(T)
\end{bmatrix}}_{U_{\dlqg}(T)}&=K_{\lqr}^{\D} L_T^{\D} 
\begin{bmatrix}
 U_\dlqg\\
 Y_\dlqg
\end{bmatrix},\\
\underbrace{\begin{bmatrix}
 u_{\lqg}^1(T)&\! \cdots \!\!& \!\! u_{\lqg}^M(T)
\end{bmatrix}}_{U_{\lqg}(T)}&=K_{\lqr}L_T^{\kf} 
\underbrace{\begin{bmatrix}
 U_\dlqg\\
 Y_\dlqg
\end{bmatrix}}_{Z}.
\end{align*}
For notational convenience, let $\Delta{K_{\lqr}}=K_{\lqr}^{\D}-K_\lqr$, $\Delta L=L_T^{\D}-L_T^\kf$, and $\Delta U= U_\dlqg(T) -U_\lqg(T)$. Then,
\begin{align}\label{eq: delta_U}
\begin{split}
 \Delta U&=K_{\lqr}^{\D}L_T^{\D}Z-K_{\lqr}L_T^{\kf}Z\\
&=K_{\lqr}\Delta L Z + \Delta K_{\lqr} L_{T}^{\kf}Z+\Delta K_\lqr \Delta L Z.
\end{split}
\end{align}
For sufficiently large $T$, we use \eqref{eq: static LQG} to write
\begin{align*}
U_{\dlqg}(T)=
\subscr{K}{LQG}^{\D}
\begin{bmatrix}
 U^n_{\dlqg}\\
Y^n_{\dlqg}
\end{bmatrix},
U_{\lqg}(T)=\subscr{K}{LQG}
\underbrace{\begin{bmatrix}
U^n_{\dlqg}\\
Y^n_{\dlqg}
\end{bmatrix}}_{Z_n}.
\end{align*}
Then, $K^{\D}_{\lqg}\!\!=\!\!U_\dlqg(T)Z_n^{\dagger}$ and
$K_{\lqg}\!=\!U_\lqg(T)Z_n^{\dagger}$. For notational convenience,
let $\Delta K_\lqg=K_\lqg^{\D} - K_\lqg$, and let $\|\cdot\|$ denote
$\|\cdot\|_2$. Then, using \eqref{eq: delta_U}, we can write
\begin{align}\label{eq: LQG bound proof}
\begin{split}
\| \Delta &K_\lqg \|_2=\|(U_\dlqg(T)-U_\lqg(T)) Z_n^{\dagger}\|=\|\Delta U Z_n^{\dagger}\|\\
\leq&\|K_{\lqr}\| \|\Delta L\| \| Z Z_n^\dagger\| + \|\Delta
K_{\lqr}\| \|L_{T}^{\kf}\| \|Z Z_n^\dagger\|\\
&+ \|\Delta K_\lqr\| \|\Delta L\| \|Z Z_n^\dagger\|.
\end{split}
\end{align}
Let $\delta \in [0,1/8]$ and assume that
$N\geq \max{\left\{N_1,N_2,N_3\right\}}$, where $N_1$, $N_2$, and
$N_3$ are as in Theorem \ref{thrm: bound on LQR traj}. Then,
inequality \eqref{eq: LQG bound} follows by using Theorem \ref{thm:
  LQR gain} and Theorem \ref{lemma: Lt_hat sample complexity} to bound
$\|\Delta K_{\lqr}\|$ and $\|\Delta L\|$ in \eqref{eq: LQG bound
  proof}, respectively, with probability at least $1-8\delta$ and with
\begin{align}\label{eq: c5 c6 c7}
\begin{split}
 c_5&\!=\!\frac{c_1\|L_t^{\kf}\| + c_1c_3}{\subscr{\sigma}{min}(\subscr{x}{m}^*)\left(1-\kappa(\subscr{x}{m}^*)\right)}+c_3\|K_\lqr\|,\\
 c_6&\!=\!\frac{c_2 c_3 }{\subscr{\sigma}{min}(\subscr{x}{m}^*)\!\left(1\!\!-\!\kappa(\subscr{x}{m}^*)\right)},~  c_7\!=\!\frac{c_2 \|L_t^{\kf}\| }{\subscr{\sigma}{min}(\subscr{x}{m}^*)\!\left(1\!-\!\kappa(\subscr{x}{m}^*)\right)},
 \end{split}
\end{align}
where $c_1$, $c_2$, $\subscr{x}{m}^*$, and $\kappa(\subscr{x}{m}^*)$ are as in Theorem \ref{thm: LQR gain}, and $c_3$ is as in Theorem \ref{thm: KF}. Finally, the probability follows using the union bound. This concludes the proof of Theorem \ref{thm: LQG}.
%
%\renewcommand{\thesubsection}{E}
%\subsection{Non-uniqueness of the LQG gain}\label{app: non-unique LQG}
%The LQG gain in 
%
%\begin{figure}[!t]
%  \centering
%  \includegraphics[width=1\columnwidth,trim={0cm 0cm 0cm
%    0cm},clip]{}
%  \caption{}
%    \label{fig: conclusion}
%\end{figure}
\bibliographystyle{unsrt}
\bibliography{alias,Main,FP,New}

\end{document}